\newtheorem{theorem}{\bf Theorem}[section]
\newtheorem{definition}[theorem]{\bf Definition}
\newtheorem{lemma}[theorem]{\bf Lemma}
\newsavebox{\savepar}
\begin{document}
	
	\title{Multiplicity of solutions to an elliptic problem with singularity and measure data}
	\author{Sekhar Ghosh$^{\dagger}$, Akasmika Panda$^\dagger$ \&  Debajyoti Choudhuri$^{\dagger, }$\footnote{Corresponding author: dc.iit12@gmail.com}\\
	\small{$^\dagger$Department of Mathematics, National Institute of Technology Rourkela, India}\\
		\small{Emails: sekharghosh1234@gmail.com \& akasmika44@gmail.com}}
	\date{}
	\maketitle
	
	\begin{abstract}
		\noindent In this paper, we prove the existence of multiple nontrivial solutions of the following equation.
		\begin{align*}
		\begin{split}
		-\Delta_{p}u & = \frac{\lambda}{u^{\gamma}}+g(u)+\mu~\mbox{in}\,\,\Omega,\\
		u & = 0\,\, \mbox{on}\,\, \partial\Omega,\\
		u&>0 \,\,\mbox{in}\,\,\Omega,
		\end{split}
		\end{align*}
		where $\Omega \subset \mathbb{R}^N$ is a smooth bounded domain with $N \geq 3$, $1 <  p-1 < q$ , $ \lambda>0$, $\gamma>0$, $g$ satisfies certain conditions, $\mu\geq 0$ is a bounded Radon measure.
	\end{abstract}
	\begin{flushleft}		
		{\bf Keywords}:~ Elliptic PDEs, $p$-Laplacian, Radon measure.\\
		\textbf{2010 AMS Mathematics Subject Classification:} Primary 35J60;
		Secondary 35J75, 35R06.
		\end{flushleft}

	\section{Introduction}
	Elliptic equations with singularity has gained a huge attention owing to its richness both from the theoretical and application point of view. Early traces of research pertaining to problems involving singularity can be found in \cite{Lazer}, where the authors have addressed the following problem.
	\begin{eqnarray}\label{refeq1}
	\begin{split}
	-\Delta u&= \frac{f(x)}{u^{\gamma}}~\text{in}~\Omega,\\
	u&= 0~\text{on}~\partial\Omega,
	\end{split}
	\end{eqnarray}
	where $\Omega$ is a strictly convex, bounded domain in $\mathbb{R}^N$ with $C^2$ boundary. The existence of a unique solution was guaranteed  iff $0<\gamma <3$. The authors in \cite{Lazer}, has also shown the existence of a solution in $C^1(\bar{\Omega})$, for $0<\gamma <1$. Haitao \cite{Hai} studied the perturbed singular problem 
	\begin{eqnarray}\label{refeq2}
	\begin{split}
	-\Delta u&= \cfrac{\lambda}{u^\gamma}+u^p~\&~u>0~\text{in}~\Omega,\\
	u&= 0~\text{on}~\partial\Omega,
	\end{split}
	\end{eqnarray}
	and guaranteed the existence of two weak solutions for $\lambda<\Lambda$, no solution for $\lambda>\Lambda$ and atleast one solution for $0<\gamma<1<p\leq\frac{N+2}{N-2}$ and some $\Lambda>0$. A further generalization to this problem can be found in \cite{gia}, where the existence of two solutions were shown for some $0<\gamma<1<p-1<q\leq p^*-1$. An important problem involving singularity in the literature can be found in the work due to Crandall et al \cite{cran}, where the authors have addressed the problem.
	\begin{eqnarray}\label{cran_prob}
	-\Delta u &=& f(u)~\text{in}~\Omega,\nonumber\\
	u&=&0~\text{on}~\partial\Omega,
	\end{eqnarray}
	where $f$ is a function with singularity near $0$. The authors in \cite{cran}, have shown the existence of a unique classical solution in $C^2(\Omega)\cap C(\bar{\Omega})$. Another noteworthy work is due to Giacomoni and Sreenadh \cite{giasri}, where the authors have investigated the following quasilinear and singular problem.
	\begin{align}\label{giasri_prob}
	\begin{split}
	-\Delta_{p} u&=\frac{\lambda}{u^\delta}+u^q~\text{in}~\Omega,\\
	u&=0~\text{on}~\partial\Omega,\\
	u&>0~\text{in}~\Omega,
	\end{split}
	\end{align}
	where $\Omega$ is a bounded domain in $\mathbb{R}^N$ with smooth boundary, $1<p-1<q$ and $\lambda,\delta>0$. The authors have shown the existence of weak solutions for small $\lambda>0$ in $W_0^{1,p}(\Omega)\cap C(\bar{\Omega})$ if and only if $\delta<2+\frac{1}{p-1}$. Further they have investigated the radial symmetry case, i.e. for $\Omega=B_R(0)$, where they have proved the global multiplicity of solutions in $C(\bar{\Omega})$ with $\delta>0$, $1<p-1<q$, by using shooting method. Readers interested in `singularity involving problem' can refer to \cite{Oliva, canino2, canino3, tali} and of late Panda et al. \cite{pgc}, who have investigated a problem involving singularity and a measure. Motivated by the work due to \cite{Bal}, which stemmed out from the work due to \cite{Arcoya}, by generalizing their result for the $p$-Laplacian, we will study the following problem.
	\begin{align}\label{e0}
	(P)\hspace{1 cm}\left\{ \begin{aligned}	-\Delta_p u&= \cfrac{\lambda}{u^\gamma}+g(u)+\mu~\text{in}~\Omega,\\
	u&= 0~\text{on}~\partial\Omega,\\
	u&>0~\text{in}~\Omega,
	\end{aligned}\right.
	\end{align}
	where $\Omega$ is a strictly convex, bounded domain in $\mathbb{R}^N$ with $C^2$ boundary, $N>2$, $1<p<N$, $\Delta_pu=\text{div}\{|\nabla u|^{p-2}\nabla u\}$, $\lambda>0$, $\gamma>0$ and $\mu$ is a bounded Radon measure. The function $g$ obeys certain growth conditions, i.e there exists some constants $C>0$ such that, $$C^{-1}t^{1+q}\leq t g(t)\leq Ct^{1+q},$$ where $p-1<q<\cfrac{N(p-1)}{N-p}$.
	\section{Notations and Definitions}
	 We will use the notations due to \cite{Evans}, to denote $W_0^{k,p}(\Omega)$, to be the space obtained by considering the closure of $C_c^{\infty}(\Omega)$ in the Sobolev space $W^{k,p}(\Omega)$ and $W_{loc}^{k,p}(\Omega)$ to be the local Sobolev space, which consists of functions $u$ such that for any compact $K\subset \Omega$, $u\in W^{k,p}(K)$. The H\"{o}lder Space is denoted by  $C^{k,\beta}(\bar{\Omega})$ with  $0<\beta\leq1$ (again a notation borrowed from $\cite{Evans}$), which consists of all functions $u\in C^k(\bar{\Omega})$ such that the norm $$\sum\limits_{|\alpha|\leq k}\sup|D^{\alpha}u|+\sup\limits_{x\neq y} \left\{\frac{|D^ku(x)-D^ku(y)|}{|x-y|^{\beta}}\right\}<\infty.$$ We will use the truncation functions for fixed $k > 0$,
	$$T_k(t)=\max\{-k,\min\{k,t\}\} ~\text{and}~G_k(t)=(|t|-k)^+sign(t)$$with $t\in \mathbb{R}$. Observe that $T_k(t)+G_k(t)=t$ for any $t\in \mathbb{R}$ and $k>0$.\\
	We denote $\mathbb{M}(\Omega)$ as the space of all finite Radon measures on $\Omega$. For every $\mu\in \mathbb{M}(\Omega)$, we define $$||\mu||_{\mathbb{M}(\Omega)}=\int_{\Omega}d|\mu|.$$ 
	We will use the Marcinkiewicz space $\mathcal{M}^q(\Omega)$ (or weak $L^q(\Omega)$)  defined for every $0 < q <\infty$, as the space of all measurable functions $f:\Omega\rightarrow\mathbb{R}$ such that the corresponding distribution functions satisfy an estimate of the form
	$$m(\{x\in \Omega:|f(x)|>t\})\leq \frac{C}{t^q}\hspace{0.4cm}t>0,\,C<\infty.$$
	Indeed, for bounded domain $\Omega$ we have $\mathcal{M}^q\subset \mathcal{M}^{\bar{q}}$ if  $q\geq \bar{q}$, for some fixed positive $\bar{q}$. Further, the following continuous embeddings holds
	\begin{equation}\label{marcin}
	L^q(\Omega)\hookrightarrow \mathcal{M}^q(\Omega)\hookrightarrow L^{q-\epsilon}(\Omega),
	\end{equation}
	for every $1<q<\infty$ and $0<\epsilon<q-1$. We will use this embedding result to show the existence of solutions. We now give the definition of convergence in the measure space.

\begin{definition}\label{measure}
	Let $(\mu_n)$ be the sequence of measurable functions in $\mathbb{M}(\Omega)$. We say $(\mu_n)$ converges to $\mu\in \mathbb{M}(\Omega)$ in the sense of measure $\cite{Folland}$ i.e. $\mu_n\rightharpoonup \mu$ in $\mathbb{M}(\Omega)$, if
	$$\int_\Omega f d\mu_n\rightarrow \int_\Omega f d\mu, \hspace{0.2cm}\forall f\in C_0(\Omega).$$
\end{definition}
\noindent In order to show the existence of solutions to the problem (\ref{e0}), we will consider the following sequence of problems $(P_n)$.

\begin{align}\label{e7}
(P_n)\hspace{1 cm}\left\{ \begin{aligned}	-\Delta_{p} u&=\frac{\lambda}{(u+\frac{1}{n})^\gamma}+ g(u)+\mu_n~\text{in}~\Omega,\\
u&=0~\text{on}~\partial\Omega,\\
u&>0~\text{in}~\Omega,
\end{aligned}\right.
\end{align}
whose solution will be denoted by $u_n$. The weak formulation to \eqref{e7} is defined as
\begin{equation}\label{weak}
\int_{\Omega} |\nabla u_n|^{p-2}\nabla u_n\cdot\nabla\phi dx=\lambda\int_{\Omega} \frac{\phi}{(u_n+\frac{1}{n})^\gamma}+\int_{\Omega} g(u_n)\phi dx+ \int_{\Omega} \mu_n\phi
dx, \,\forall\,\phi\in C_0^{1}(\bar{\Omega})
\end{equation}
where, ($\mu_n$) is a sequence of smooth non-negative functions bounded in $L^1(\Omega)$ and converging weakly to $\mu$ in the sense of Definition $\ref{measure}$. We now give the definition of weak solution to the problem ($P$) in \eqref{e0}.
\begin{definition}
	We say a function $u\in W_{loc}^{1,p}(\Omega)\cap L^{\infty}(\Omega)$ is a weak solution to the problem (\ref{e0}) if $\cfrac{\phi}{u^\gamma}\in L^1(\Omega)$ and it satisfies 	
	\begin{equation}\label{e1}
	\int_\Omega |\nabla u|^{p-2}\cdot\nabla u\cdot\nabla \phi~ dx=\lambda\int_\Omega \frac{\phi}{u^\gamma}dx +\int_\Omega g(u)\phi dx + \int_\Omega \phi d\mu
	\end{equation}
	for every $\phi\in W_0^{1,p}(\Omega^{'})$ with $\Omega^{'}\subset\subset \Omega$.
\end{definition}

\noindent In the subsequent section, we will prove a few lemmas which will be required to prove our main result in Section $\ref{main}$. Note that the solution will be named as $u_n$ in multiple places for different problems.
\section{Important Lemmas}\label{lemmas}
In this section we will prove a few important lemmas,  Lemma \eqref{l1} - \eqref{l6}, which are the main tools needed to prove the main result of existence of solution to the problem \eqref{e0}.
	\begin{lemma}\label{l1}
		The problem 
		\begin{eqnarray}\label{e2}
		\begin{split}
		-\Delta_p u&= \cfrac{\lambda}{(u+\frac{1}{n})^\gamma}~\text{in}~\Omega,\\
		u&=0~\text{on}~\partial\Omega,
		\end{split}
		\end{eqnarray}
		possesses a nonnegative weak solution in  $W_{loc}^{1,p}(\Omega)\cap L^{\infty}(\Omega)$ for each $n\in \mathbb{N}$.
	\end{lemma}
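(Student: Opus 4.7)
The decisive observation is that for each fixed $n$ the right-hand side is uniformly bounded in $u$: for any measurable $v:\Omega\to\mathbb{R}$ one has $0 \le \lambda/(v^+ + 1/n)^\gamma \le \lambda n^\gamma$. The singularity has therefore been regularised and \eqref{e2} becomes a quasilinear equation with bounded source. I would prove existence via a Schauder fixed point argument on $L^p(\Omega)$, and then recover the sign and regularity information separately.

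Define $T:L^p(\Omega)\to L^p(\Omega)$ by $T(v)=u$, where $u\in W_0^{1,p}(\Omega)$ is the unique weak solution of
$$-\Delta_p u = \frac{\lambda}{(v^+ + \tfrac{1}{n})^\gamma}\ \text{ in } \Omega,\qquad u=0\ \text{ on }\partial\Omega.$$
Existence and uniqueness of $u$ follow from the Leray-Lions theorem since the source sits in $L^\infty(\Omega)\subset W^{-1,p'}(\Omega)$ independently of $v$. Testing with $u$ itself and applying Poincar\'e yields $\|\nabla u\|_p \le C(n)$, while Moser/Stampacchia iteration applied to the bounded data gives $\|u\|_\infty \le C'(n)$; both constants are independent of $v$, so $T$ maps some large closed ball $B_R\subset L^p(\Omega)$ into itself. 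Compactness of $T$ on this ball follows from the uniform $W_0^{1,p}$-bound together with Rellich--Kondrachov.

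For continuity, if $v_k\to v$ in $L^p(\Omega)$ then along a subsequence $v_k\to v$ a.e.; by dominated convergence the sources $\lambda/(v_k^+ + 1/n)^\gamma$ converge strongly in $L^{p'}(\Omega)$, and by the standard continuous dependence result for the $p$-Laplacian (which relies on Minty's monotonicity trick, equivalently the $(S_+)$ property of $-\Delta_p$) one obtains $T(v_k)\to T(v)$ in $W_0^{1,p}(\Omega)$, hence in $L^p(\Omega)$. Schauder's fixed point theorem then produces $u_n\in W_0^{1,p}(\Omega)\cap L^\infty(\Omega)$ with $T(u_n)=u_n$.

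Finally, nonnegativity is checked by testing the fixed point equation with $u_n^-\in W_0^{1,p}(\Omega)$: since $\nabla u_n\cdot\nabla u_n^- = -|\nabla u_n^-|^2$ a.e. on the support of $u_n^-$, we obtain
$$-\int_\Omega |\nabla u_n^-|^p\,dx \;=\; \lambda\int_\Omega \frac{u_n^-}{(u_n^+ + \tfrac{1}{n})^\gamma}\,dx \;\ge\; 0,$$
which forces $u_n^-\equiv 0$, so $u_n\ge 0$ and $(u_n^+ + 1/n)^\gamma = (u_n + 1/n)^\gamma$ almost everywhere, making $u_n$ a solution of the original problem \eqref{e2}. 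The $W_{loc}^{1,p}(\Omega)\cap L^\infty(\Omega)$ regularity claimed in the statement is then immediate since $u_n\in W_0^{1,p}(\Omega)\cap L^\infty(\Omega)$ by construction. The main technical point I expect to dwell on is the continuity of $T$: because $-\Delta_p$ is nonlinear, passing to the limit in $|\nabla u_k|^{p-2}\nabla u_k$ requires upgrading weak to strong convergence of the gradients, which is the step where Minty's trick is essential.
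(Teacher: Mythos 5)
Your proposal is correct and follows essentially the same route as the paper: freeze the unknown in the singular term (the paper uses $|v|$, you use $v^+$), solve the resulting coercive problem to define a map on $L^p(\Omega)$, derive bounds independent of $v$, get compactness from Rellich--Kondrachov and continuity from dominated convergence plus uniqueness, and conclude with Schauder's fixed point theorem. The only cosmetic difference is that the paper obtains nonnegativity (in fact strict positivity) from the strong maximum principle and $C^1$ regularity from Lieberman, rather than by testing with $u_n^-$ as you do.
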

	\begin{proof}
		The idea of the proof is to apply Schauder's fixed point argument. For a fixed $n\in \mathbb{N}$ and a fixed $v\in L^p(\Omega)$, we define the map $J_\lambda:W_0^{1,p}(\Omega)\rightarrow \mathbb{R},$ as follows,
		$$J_\lambda(u)=\dfrac{1}{p}\int_\Omega |\nabla u|^p dx -\lambda\int_\Omega\cfrac{u}{(|v|+\frac{1}{n})^\gamma} dx.$$
		It is easy to see that, $J_\lambda$ is continuous, coercive and strictly convex in $W_0^{1,p}(\Omega).$ Therefore, the existence of a unique minimizer $w\in W_0^{1,p}(\Omega)$ corresponding to a $v\in L^p(\Omega)$ is certain.\\
		We define, $H: L^p(\Omega)\rightarrow L^p(\Omega)$ by $$H(v)= (-\Delta_p)^{-1} \left[\frac{\lambda}{(|v|+\frac{1}{n})^\gamma}\right]:=w.$$
		On choosing  $w$ as a test function from $W_0^{1,p}(\Omega)$ in the weak formulation of \eqref{e2}, we have
		\begin{align*}\label{e3}
		\begin{split}
		\int_\Omega|\nabla w|^p = \int_\Omega |\nabla w|^{p-2} \nabla w\cdot\nabla w&=\int_\Omega \cfrac{\lambda}{(|v|+\frac{1}{n})^\gamma} w \\
		&\leq\lambda n^{\gamma} \int_\Omega |w|. 
		\end{split}
		\end{align*}
Hence, by using the Poincar\'{e} inequality and the H\"{o}lder's inequality  on the left and right hand side respectively, we get 
		\begin{equation}\label{e4}
		\|w\|_p\leq C(n,\gamma,\lambda).
		\end{equation}
		Let us consider a sequence $(v_k)$ that converges to $v$ in $L^p(\Omega)$. By using the dominated convergence theorem, we have  \begin{center}
			$\left\|\cfrac{\lambda}{(|v_k| +\frac{1}{n})^\gamma}-\cfrac{\lambda}{(|v| +\frac{1}{n})^\gamma}\right\|_{L^p(\Omega)}\longrightarrow 0.$ 
		\end{center}	
		Thus, the convergence of $w_k=H(v_k)$  to $w=H(v)$ in $L^p(\Omega)$ can be followed from the uniqueness of the weak solution. Hence, the continuity of $H$  over $L^p(\Omega)$ is followed. By the estimate in equation $\eqref{e4}$ and by the Rellich-Kondrochov theorem, we get that $H(L^p(\Omega))$ is relatively compact in $L^p(\Omega)$. We now can apply the Schauder's fixed point theorem to guarantee the existence of a fixed point say $w$. By the regularity theorem of Lieberman \cite{Lieberman}, we have $u_n\in C^1(\bar{\Omega}), \forall n\in \mathbb{N}$. Using the strong maximum principle \cite{Guedda}, we have $w>0$ in $\Omega$ and this concludes the proof.
	\end{proof}
	\begin{lemma}\label{l2}
		The sequence $(u_n)$ is increasing w.r.t n and for every $K\subset\subset\Omega$, there exists $C_K$ (only depends on $K$) such that $u_n\geq C_K>0$, a.e. in $K$ with $||u_n||_\infty \leq R {\lambda}^{\frac{1}{\gamma+p-1}},~\forall n\in\mathbb{N}$, $R$ is independent of $n$.
	\end{lemma}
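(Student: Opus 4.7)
I would first establish $u_n\le u_{n+1}$: subtract the weak formulations of \eqref{e2} at indices $n$ and $n+1$ and test with $(u_n-u_{n+1})^+\in W_0^{1,p}(\Omega)$. The diffusion side is nonnegative by the standard monotonicity inequality $(|a|^{p-2}a-|b|^{p-2}b)\cdot(a-b)\ge 0$; on the set $\{u_n>u_{n+1}\}$ one has $u_n+\tfrac{1}{n}>u_{n+1}+\tfrac{1}{n+1}$ (since $\tfrac{1}{n}>\tfrac{1}{n+1}$), so the singular source contributes a nonpositive term. Both sides must therefore vanish, yielding $u_n\le u_{n+1}$. The interior lower bound is then immediate: Lemma \ref{l1} furnishes $u_1\in C^1(\bar\Omega)$ with $u_1>0$ in $\Omega$, so for any $K\subset\subset\Omega$ the continuity of $u_1$ yields $C_K:=\min_K u_1>0$, and the monotonicity gives $u_n\ge u_1\ge C_K$ on $K$ for every $n$.

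\textbf{Uniform $L^\infty$-bound.} I would run a Stampacchia truncation argument. Testing \eqref{e2} with $G_k(u_n)=(u_n-k)^+$ and using $(u_n+1/n)^\gamma\ge k^\gamma$ on $A_k:=\{u_n>k\}$ produces
\[
\int_\Omega|\nabla G_k(u_n)|^p\,dx\le \frac{\lambda}{k^\gamma}\int_\Omega G_k(u_n)\,dx.
\]
Combining Sobolev's embedding $\|\cdot\|_{p^*}\le C\|\nabla\cdot\|_p$ with Hölder's inequality then yields a recursion of the form
\[
|A_h|\le \frac{C\,\lambda^{p^*/(p-1)}}{(h-k)^{p^*}\,k^{\gamma p^*/(p-1)}}\,|A_k|^{(p^*-1)/(p-1)},\qquad h>k,
\]
whose exponent $(p^*-1)/(p-1)>1$. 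Above a suitable threshold $k_0$ the classical Stampacchia iteration lemma delivers $|A_R|=0$ with $R=R(\Omega,N,p,\gamma)\,\lambda^{1/(\gamma+p-1)}$, which is the claimed bound, uniformly in $n$.

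\textbf{Main difficulty.} The delicate point is extracting the precise scaling $\lambda^{1/(\gamma+p-1)}$ from the iteration, which is complicated both by the explicit $\lambda$-factor and by the $k$-dependent prefactor $k^{-\gamma p^*/(p-1)}$ in the recursion. The cleanest way around this is a preliminary rescaling $v_n:=\lambda^{-1/(\gamma+p-1)}u_n$, which transforms \eqref{e2} into the same equation with $\lambda=1$ (and $1/n$ replaced by $\lambda^{-1/(\gamma+p-1)}/n$); running the Stampacchia truncation on $v_n$ then produces an $n$-independent bound $\|v_n\|_\infty\le R$, and undoing the rescaling recovers the stated inequality. A small technical caveat is that the $k$-dependent coefficient forces the iteration to start above some $k_0>0$, but this only affects the constant $R$, not the $\lambda$-exponent.
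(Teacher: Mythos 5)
Your proposal is correct and follows essentially the same route as the paper: monotonicity by testing the difference of the approximating problems with $(u_n-u_{n+1})^+$ and using the monotonicity of the $p$-Laplacian plus the sign of the singular term, the interior lower bound via $u_1>0$, and the uniform sup-bound via a Stampacchia truncation argument combined with the rescaling $v=\lambda^{-1/(\gamma+p-1)}u$ to reduce to the case $\lambda=1$ (the paper's Case 1/Case 2). The only cosmetic difference is that the paper discards the $k^{-\gamma}$ prefactor by bounding $u_n^{-\gamma}<1$ on $\{u_n>k\}$ for $k\geq 1$, whereas you keep it and start the iteration above a threshold $k_0$.
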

	\begin{proof}
		Consider a sequence of problems
		\begin{align}\label{e5}
		\begin{split}
		-\Delta_p u&= \cfrac{\lambda}{(u +\frac{1}{n})^\gamma} ~\text{in}~\Omega,\\
		u&=0~\text{on}~\partial\Omega.\\
		\end{split}
		\end{align}
		For each $n$, let $u_n$ be the solution to the problem \eqref{e5}. Consider,
		$$	\int_\Omega (|\nabla u_n|^{p-2}\cdot\nabla u_n-|\nabla u_{n+1}|^{p-2}\cdot\nabla u_{n+1})\cdot\nabla\phi~ dx=\lambda\int_\Omega \left((u_n +\frac{1}{n})^{-\gamma}-(u_{n+1} +\frac{1}{n+1})^{-\gamma}\right)\phi~ dx.$$
		We choose, the test function $\phi=(u_n-u_{n+1})^+$ to obtain,
		$$\int_\Omega (|\nabla u_n|^{p-2}\cdot\nabla u_n-|\nabla u_{n+1}|^{p-2}\cdot\nabla u_{n+1})\cdot \nabla(u_n-u_{n+1})^+ dx$$
		$$ \hspace{6cm}	\leq \lambda\int_\Omega \left((u_n +\frac{1}{n+1})^{-\gamma}-(u_{n+1} +\frac{1}{n+1})^{-\gamma}\right)(u_n-u_{n+1})^+ dx.$$
		Using the inequalities from \cite{grey}, we get for $p\geq2$, 
		\begin{align*}
		\int_\Omega (|\nabla u_n|^{p-2}\nabla u_n-|\nabla u_{n+1}|^{p-2}\cdot\nabla u_{n+1})\cdot \nabla(u_n-u_{n+1})^+ dx &\geq C_p ||\nabla(u_n-u_{+n+1})^+||^p\\
		&\geq 0
		\end{align*}
		and for $1<p<2$,
		\begin{align*}
		\int_\Omega (|\nabla u_n|^{p-2}\nabla u_n-|\nabla u_{n+1}|^{p-2}\nabla u_{n+1})\cdot \nabla(u_n-u_{n+1})^+ dx &\geq C_p \frac{||u_n-u_{+n+1}||^2}{(||u_n||+||u_{+n+1}||)^{2-p}}\\
		&\geq 0.
		\end{align*}
		\noindent Therefore, we have
		$$0 \leq \lambda\int_\Omega \left\{\left( u_n +\frac{1}{n+1}\right)^{-\gamma}-\left( u_{n+1} +\frac{1}{n+1}\right)^{-\gamma}\right\}(u_n-u_{n+1})^+ dx \leq 0.$$
		Hence, we get $||(u_n-u_{n+1})^+||=0$. This implies $u_n$ is monotonically increasing w.r.t $n$. Now, using the Strong Maximum principle \cite{Vazquez}, we get $u_1>0$ in $\Omega$, where $u_1$ is the solution of the problem $\eqref{e5}$ with $n=1$. Since, $u_n$ is monotonically increasing with respect to $n$, we have $u_n>u_1$ in $\Omega$ and hence we conclude that $u_n>C_K>0$, for every $K\subset\subset\Omega$ with $C_K$ being independent of $n$.\\
		$\textit{Claim:}$ $(u_n)$ is uniformly bounded in $\Omega.$\\
	{\it Case 1:} When $\lambda=1$. Define, $M(k)=\{x\in\Omega:u_n>k\}$ and $$S_k(u_n)= { \left\{
			\begin{array}{ll}
			u_n-k; & \text{if}~u_n>k	\\
			0; &\text{if}~u_n\leq k.
			\end{array}
			\right. }$$
		We choose, $S_k(u_n)$ as the test function in the weak formulation of \eqref{e5} to get,
		\begin{align}\label{e6}
		\int_{M(k)}|\nabla u_n|^{p-2}\nabla u_n.\nabla u_n & = \int_{M(k)}|\nabla u_n|^p
		\nonumber\\& =  \int_{M(k)}\frac{u_n-k}{(u_n+\frac{1}{n})^\gamma}\nonumber\\
		 &<\int_{M(k)}\frac{u_n-k}{u_n^\gamma}\nonumber
		\\&\leq ||u_n-k||_{L^p(M(k))} |M(k)|^{\frac{1}{p\prime}}\nonumber\\& \leq C ||\nabla u_n||_{L^p(M(k))}|M(k)|^{\frac{1}{p\prime}},~~~~\text{(  by the  Poincar\'{e} inequality)}.\nonumber \end{align}
		By using the Sobolev embedding theorem, we get
		$$||u_n||^{p-1}_{L^{p^*}(M(k))}< \frac{C}{S^{p-1}}|M(k)|^{\frac{1}{p\prime}},~ \text{where}~ p^*=\frac{Np}{N-p}~\text{(Sobolev conjugate of}~ p).$$
		It is easy to see that, for $1<k<l, ~M(l)\subset M(k)$. Hence, 
		$$|M(l)|\leq \left\{\frac{C}{S^{p-1}}\right\}^{\frac{p^*}{p-1}}\frac{1}{(l-k)^{p^*}}|M(k)|^{\frac{p^*}{p}} .$$
By the Lemma 4.1 of \cite{Stampacchia}, we can guarantee the existence of a $T>0$ independent of $n$ such that $|M(T)|=0$. Therefore, $||u_n||_\infty\leq T.$ \\
		{\it Case 2:} Suppose $v$ is such that
	\begin{eqnarray}
		\int_{\Omega}|\nabla v|^{p-2}\nabla v\cdot\nabla \phi &<& \lambda\int_{\Omega}\frac{\phi}{v^{\gamma}}~\forall\lambda\in W_0^{1,p}(\Omega),\, \phi\geq 0.\label{weak_eqn1}
		\end{eqnarray}
		Let $\lambda>0$. Choose $v=(\frac{1}{\lambda})^{\frac{1}{\gamma+p-1}}w$. We can see that $v$ satisfies 
		$$\int_{\Omega}|\nabla v|^{p-2}\nabla v.\nabla\phi < \int_\Omega\frac{\phi}{v^\gamma},~\forall\phi\in W_0^{1,p}(\Omega), ~ \phi>0.$$
		Therefore, using the result from {\it Case 1}, for $\lambda=1$, we have 
		$||v||_\infty\leq T $, which implies that $\|u_n\|_\infty \leq R {\lambda}^{\frac{1}{\gamma+p-1}}.$ Hence, $(u_n)$ is uniformly bounded in $\Omega$. Finally, on using a result due to Lieberman \cite{Lieberman}, helps us to conclude that $u_n\in C^1(\Omega)$, $\forall n\in\mathbb{N}$. 
	\end{proof}
	\begin{lemma}\label{l3}
	Every bounded nontrivial solution $v$ of the problem $-\Delta_p u=g(u) +\mu_n$ in $\Omega$, is uniformly bounded below in $L^\infty(\Omega)$, i.e. $\|v\|_{\infty}>\delta$, for some $\delta>0$.
	\end{lemma}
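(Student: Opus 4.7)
\medskip\noindent\textbf{Proof proposal.} The plan is to combine a testing argument tied to the first Dirichlet eigenvalue of $-\Delta_{p}$ with a weak comparison against the purely measure-driven $p$-Laplace problem. The bounded nontrivial solution $v$ may be taken nonnegative: the growth $C^{-1}t^{q+1}\leq t g(t)$ forces $g(t)\geq 0$ on $[0,\infty)$, so $-\Delta_{p} v = g(v)+\mu_n\geq 0$, and V\'azquez's strong maximum principle gives $v>0$ in $\Omega$.

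Using $\phi=v$ in the weak formulation, together with the upper growth bound $g(v)v\leq Cv^{q+1}$ and the pointwise estimate $v^{q+1}\leq \|v\|_{\infty}^{q+1-p}v^{p}$ (valid because $q+1>p$), I obtain
\begin{equation*}
\int_{\Omega}|\nabla v|^{p}\,dx \;\leq\; C\|v\|_{\infty}^{q+1-p}\int_{\Omega} v^{p}\,dx \;+\; \|v\|_{\infty}\,\|\mu_n\|_{L^{1}(\Omega)}.
\end{equation*}
Combining this with the variational inequality $\lambda_{1}\int_{\Omega} v^{p}\,dx \leq \int_{\Omega}|\nabla v|^{p}\,dx$ for the first eigenvalue $\lambda_{1}>0$ of $-\Delta_{p}$ on $W_{0}^{1,p}(\Omega)$ produces the key bound
\begin{equation*}
\bigl(\lambda_{1}-C\|v\|_{\infty}^{q+1-p}\bigr)\int_{\Omega} v^{p}\,dx \;\leq\; \|v\|_{\infty}\,\|\mu_n\|_{L^{1}(\Omega)}.
\end{equation*}

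I then split on $\mu_n$. When $\mu_n\equiv 0$, the right-hand side vanishes, and nontriviality ($\int_{\Omega}v^{p}>0$) forces $\|v\|_{\infty}\geq (\lambda_{1}/C)^{1/(q+1-p)}=:\delta_{1}$. When $\mu_n\not\equiv 0$, let $\bar{w}_n\in W_{0}^{1,p}(\Omega)$ denote the unique solution of $-\Delta_{p}\bar{w}_n=\mu_n$ with zero boundary data. Since $g(v)\geq 0$, one has $-\Delta_{p}v\geq -\Delta_{p}\bar{w}_n$ in $\Omega$ with matching boundary values, and weak comparison for $-\Delta_{p}$ yields $v\geq \bar{w}_n$ a.e. in $\Omega$. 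V\'azquez's strong maximum principle applied to $\bar{w}_n$ (nonnegative, nontrivial $p$-superharmonic) gives $\bar{w}_n>0$ in $\Omega$, so $\|v\|_{\infty}\geq \|\bar{w}_n\|_{\infty}=:\delta_{2}>0$. Setting $\delta:=\min\{\delta_{1},\delta_{2}\}$ (or $\delta_{1}$ when $\mu_n\equiv 0$) gives the asserted uniform lower bound.

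The chief obstacle is that testing alone yields only the displayed inequality, whose right-hand side $\|v\|_{\infty}\|\mu_n\|_{L^{1}}$ does not on its own prevent $\|v\|_{\infty}$ from being arbitrarily small; the comparison step is precisely what extracts a positive floor in the inhomogeneous regime by borrowing it from the purely measure-driven solution $\bar{w}_n$.
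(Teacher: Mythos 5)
Your eigenvalue-testing step is correct as far as it goes, and in the homogeneous case $\mu_n\equiv 0$ it actually gives a cleaner, quantitative proof of the measure-free statement (Lemma \ref{ll3}) than the paper's normalization/blow-up argument: $\|v\|_\infty\geq(\lambda_1/C)^{1/(q+1-p)}$ with an explicit constant. The genuine gap is in the inhomogeneous case. There your floor is $\delta_2=\|\bar w_n\|_\infty$, which depends on $n$ through $\mu_n$, and nothing in your argument bounds it away from zero uniformly: the $\mu_n$ are only bounded in $L^1(\Omega)$ and converge weakly to a measure $\mu$ that is allowed to vanish, so $\|\bar w_n\|_\infty$ can be arbitrarily small along the sequence, and $\delta=\min\{\delta_1,\delta_2\}$ degenerates as $n$ varies. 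But the lemma is stated, and later used, with a $\delta$ that must not depend on $n$: it enters the blow-up analysis in Step 6 of Lemma \ref{l5} and the choice of the inner radius $r$ in the degree-theoretic construction of Lemma \ref{l6}, both of which need a floor valid simultaneously for all $n$. Your own displayed inequality shows why the testing argument alone cannot supply this once $\int_\Omega v\,\mu_n\,dx>0$, and the comparison with $\bar w_n$ merely relocates the problem to an $n$-dependent quantity.

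The paper sidesteps this by never letting $\mu_n$ enter the quantitative estimate: it first establishes the floor $\delta$ for the measure-free equation $-\Delta_p u=g(u)$ (Lemma \ref{ll3}, via $w_m=u_m\|u_m\|_\infty^{-1}$, uniform $C^{1,\beta}$ bounds from Tolksdorf and Ascoli--Arzel\`a), and then uses only the sign condition $\mu_n\geq 0$ to view $v$ as a supersolution of that equation and transfer the same $\delta$, which is therefore independent of $n$ and of $\mu_n$. To repair your proof you need the same kind of reduction: anchor the lower bound to the homogeneous problem (whose floor your own $\delta_1$ computation provides), rather than to $\bar w_n$, e.g.\ by a comparison or sub/supersolution argument placing a nontrivial solution of $-\Delta_p u=g(u)$ below $v$; the dichotomy on whether $\mu_n$ vanishes, as you have set it up, does not yield the uniform constant the later lemmas require.
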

	\noindent We instead first prove the following lemma.
	\begin{lemma}\label{ll3}
		Every bounded nontrivial solution $u$ of the problem $-\Delta_p u=g(u)$ in $\Omega$, is uniformly bounded below in $L^\infty(\Omega)$, i.e. $\|u\|_{\infty}>\delta$, for some $\delta>0$.
	\end{lemma}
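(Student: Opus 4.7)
The plan is to test the equation $-\Delta_p u = g(u)$ against $u$ itself (interpreted in $W_0^{1,p}(\Omega)$, which is legitimate because $g(u)\in L^\infty(\Omega)$ so by standard regularity the bounded solution lies in $W_0^{1,p}$). After integration by parts this gives
\[
\int_\Omega |\nabla u|^p\,dx = \int_\Omega g(u)\,u\,dx.
\]
Using the upper growth bound $t\,g(t)\le C\,t^{1+q}$ (applied to $|u|$, or directly to $u\ge 0$ if we can first arrange that, e.g. by the strong maximum principle of \cite{Vazquez}) we get
\[
\int_\Omega |\nabla u|^p\,dx \le C\int_\Omega |u|^{q+1}\,dx.
\]

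Next I would interpolate $|u|^{q+1}$ against $\|u\|_\infty$. Since $q>p-1$, we have $q+1>p$, so
\[
\int_\Omega |u|^{q+1}\,dx \le \|u\|_\infty^{\,q+1-p}\int_\Omega |u|^p\,dx,
\]
and then the Poincaré inequality $\int_\Omega |u|^p\,dx \le C_P \int_\Omega |\nabla u|^p\,dx$ yields
\[
\int_\Omega |\nabla u|^p\,dx \;\le\; C\,C_P\,\|u\|_\infty^{\,q+1-p}\int_\Omega |\nabla u|^p\,dx.
\]

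Because $u$ is a nontrivial solution, $\int_\Omega |\nabla u|^p\,dx>0$, so I may divide through to obtain
\[
1 \;\le\; C\,C_P\,\|u\|_\infty^{\,q+1-p},
\]
which forces $\|u\|_\infty \ge (C\,C_P)^{-1/(q+1-p)} =: \delta>0$. Note that the exponent $q+1-p$ is strictly positive precisely thanks to the assumption $p-1<q$, which is what makes the argument non-circular.

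The only delicate step is justifying $u\in W_0^{1,p}(\Omega)$ and the admissibility of $u$ as a test function: since the equation has no singular term, $g(u)\in L^\infty(\Omega)$ and $u\in L^\infty(\Omega)\cap W_{\mathrm{loc}}^{1,p}(\Omega)$ with zero boundary trace, standard regularity (Lieberman \cite{Lieberman}) gives $u\in C^1(\overline{\Omega})\cap W_0^{1,p}(\Omega)$, so the testing is valid. Everything else is just H\"older/Poincaré bookkeeping; the main conceptual point is the \emph{superlinear} behaviour of $g$ near zero encoded by $q>p-1$, which is exactly the mechanism that prevents small nontrivial solutions.
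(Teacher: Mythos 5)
Your argument is correct, but it proceeds quite differently from the paper's proof. You test the equation with the solution $u$ itself and combine the upper growth bound $t\,g(t)\le Ct^{1+q}$ with the Poincar\'e inequality, so that $\int_\Omega|\nabla u|^p\le C\,C_P\,\|u\|_\infty^{\,q+1-p}\int_\Omega|\nabla u|^p$, and nontriviality lets you divide and read off the explicit bound $\|u\|_\infty\ge (C\,C_P)^{-1/(q+1-p)}$; the only hypotheses you use are $q>p-1$, nonnegativity of $u$, and $u\in W_0^{1,p}(\Omega)$, the last two of which you justify (maximum principle, Lieberman regularity) in an acceptable way. The paper instead argues by contradiction with a normalization (blow-down) procedure: it supposes a sequence of nontrivial solutions with $\|u_m\|_\infty\to 0$, sets $w_m=u_m\|u_m\|_\infty^{-1}$, notes that $-\Delta_p w_m$ is controlled by $C\,w_m^q\|u_m\|_\infty^{q-p+1}\to 0$, invokes Tolksdorf/Lieberman $C^{1,\beta}$ estimates and Ascoli--Arzel\`a to pass to a limit $w$, and derives a contradiction with $\|w_m\|_\infty=1$ since the limit must be the trivial solution. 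Your route is more elementary and quantitative: it avoids regularity theory and compactness entirely and produces an explicit $\delta$ depending only on $C$, the Poincar\'e constant, and $q-p+1$; moreover it only needs the upper half of the growth condition on $g$. The paper's normalization argument is heavier but fits the a priori estimate machinery used elsewhere in the paper (notably the blow-up analysis in Lemma \ref{l5}) and does not rely on being able to use $u$ as a global test function, which can matter for weaker solution notions. As written, both proofs implicitly assume the Dirichlet condition $u=0$ on $\partial\Omega$; you should state this explicitly since it is what makes both the Poincar\'e step and the division step (nontrivial $u$ in $W_0^{1,p}(\Omega)$ has $\int_\Omega|\nabla u|^p>0$) legitimate.
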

	\begin{proof}
	
		Let us consider a sequence of nontrivial solutions $(u_m)$ such that $||u_m||_{\infty}\rightarrow 0$ as $m\rightarrow \infty$. Then we can define $w_m(x)=u_m(x)||u_m||_{\infty}^{-1}.$ Clearly, $\|w_m\|_{\infty}=1$. As $u_m$ satisfies $-\Delta_p u=g(u)$, we have
		\begin{align}
		\Delta_p w_m & = \Delta_p (u_m(x)||u_m||_{\infty}^{-1}) \nonumber\\
		& = \nabla(|\nabla (u_m(x)||u_m||_{\infty}^{-1})|^{p-2} \nabla (u_m(x)||u_m||_{\infty}^{-1}))\nonumber\\
		&= \Delta_p u_m||u_m||_{\infty}^{1-p}\nonumber\\
		&= g(u_m)||u_m||_{\infty}^{1-p}\nonumber\\ & \leq C u_m^q||u_m||_{\infty}^{1-p} \nonumber\\
		&\leq C w_m^q||u_m||_{\infty}^{1-p+q} \nonumber\\
		&= f_m.\nonumber
		\end{align}	
		Now for very large $m$, these $f_m$'s are uniformly bounded in $L^{\infty}(\Omega)$. So, $||w_m||_{C^{1,\beta}(\bar\Omega)}\leq M$ for some $\beta\in (0,1)$, by regularity results in \cite{Tolksdorf}, where $M$ is independent of $m$. Hence, by the  Ascoli-Arzela theorem, the sequence $(w_m)$ converges uniformly to $w$ in $C_0^1(\Omega)$. This implies $w=0$. But with the consideration of the Lemma 1.1 of \cite{Azizieh Cle}, we have a unique solution $w$ in $C_0^1(\Omega)$, which contradicts the fact that $||w_m||_{\infty}=1$. Hence, there exists $\delta>0$ such that $\|u\|_{\infty}>\delta$.
	\end{proof}
\noindent{\it{Proof of Lemma \ref{l3}.}} Since $\mu_n\geq 0$, then the solutions of the problem in Lemma \ref{l3} are supersolutions of the problem in Lemma \ref{ll3}. Therefore, if $v$ and $u$ are  solutions of the problem in Lemma \ref{l3} and Lemma \ref{ll3} respectively, then $||v||_\infty\geq ||u||_\infty>\delta>0$, for some $\delta>0$.
	\begin{lemma}\label{l4}
		There exists a $\bar{\lambda}>0$ such that the following problem 
		\begin{eqnarray}\label{ne7}
		-\Delta_p u&=& \cfrac{\lambda}{(u+\frac{1}{n})^\gamma}  +g(u)+\mu_n~\text{in}~\Omega,\nonumber\\
		u&=&0~\text{on}~\partial\Omega,\\
		u&>&0~\text{in}~\Omega\nonumber
		\end{eqnarray}
		does not have any weak solution $u\in W_0^{1,p}(\Omega)$ for $\lambda\geq\bar{\lambda}$.
	\end{lemma}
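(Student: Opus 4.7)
The plan is to argue by contradiction: assume $u \in W_0^{1,p}(\Omega)$ solves the problem for some $\lambda \geq 1$, and exploit the super-linear growth of $g$ to force $\lambda$ to be bounded. The scheme combines a pointwise lower bound for the right-hand side, a comparison estimate showing $u$ grows with $\lambda$, and a Picone-type test against the first Dirichlet eigenfunction of $-\Delta_p$.

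\emph{Step 1 (lower bound on the source and on $u$).} Using $g(t) \geq C^{-1} t^q$, $\mu_n \geq 0$ and $1/n \leq 1$, a case split on whether $u(x) \leq \lambda^{1/(q+\gamma)}$ yields
\[
\frac{\lambda}{(u+1/n)^\gamma} + g(u) + \mu_n \;\geq\; c_0\, \lambda^{q/(q+\gamma)} \quad\text{in }\Omega,
\]
with $c_0 = \min\{2^{-\gamma},C^{-1}\} > 0$. Let $W \in W_0^{1,p}(\Omega)$ be the unique positive solution of $-\Delta_p W = 1$ with zero boundary data; by Hopf, $W \gtrsim d(\cdot, \partial\Omega)$. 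Since $-\Delta_p(\kappa W) = \kappa^{p-1}$, the weak comparison principle for $-\Delta_p$ applied to $u$ and $c_0^{1/(p-1)} \lambda^{q/[(q+\gamma)(p-1)]} W$ gives
\[
u \;\geq\; c_1\, \lambda^\alpha\, W \quad \text{in }\Omega, \qquad \alpha := \frac{q}{(q+\gamma)(p-1)} > 0.
\]

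\emph{Step 2 (Picone and contradiction).} Let $\phi_1 > 0$ be the first Dirichlet eigenfunction of $-\Delta_p$ with eigenvalue $\lambda_1$. For every $\varepsilon > 0$ the function $\phi_1^p/(u+\varepsilon)^{p-1}$ lies in $W_0^{1,p}(\Omega) \cap L^\infty(\Omega)$ and is admissible in the weak formulation. Picone's inequality for the $p$-Laplacian (with $u+\varepsilon$ in place of $u$) yields
\[
\lambda_1 \int_\Omega \phi_1^p \,dx = \int_\Omega |\nabla \phi_1|^p \,dx \;\geq\; \int_\Omega |\nabla u|^{p-2} \nabla u \cdot \nabla\left(\frac{\phi_1^p}{(u+\varepsilon)^{p-1}}\right) dx,
\]
and inserting the equation for $u$ while dropping the non-negative singular and measure contributions leaves
\[
\lambda_1 \int_\Omega \phi_1^p \,dx \;\geq\; C^{-1} \int_\Omega \frac{u^q}{(u+\varepsilon)^{p-1}}\, \phi_1^p \,dx.
\]
Letting $\varepsilon \to 0^+$ by monotone convergence and plugging in $u \geq c_1 \lambda^\alpha W$ from Step 1 produces
\[
\lambda_1 \int_\Omega \phi_1^p \,dx \;\geq\; C^{-1} c_1^{q-p+1}\, \lambda^{\alpha(q-p+1)} \int_\Omega W^{q-p+1} \phi_1^p \,dx.
\]
Because $q > p-1$ and $\alpha > 0$, the right-hand side diverges as $\lambda \to \infty$ while the left-hand side is a fixed constant, giving the required contradiction and hence the existence of the desired threshold $\bar{\lambda}$.

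\emph{Main obstacle.} The delicate point is combining Picone's identity with the singular right-hand side, since $\phi_1^p/u^{p-1}$ need not be a legal test function when $\gamma$ is large (the integral of the singular datum against it may diverge at $\partial\Omega$). The $\varepsilon$-regularisation $\phi_1^p/(u+\varepsilon)^{p-1}$ sidesteps this by providing a bounded admissible test function at every stage, and the non-negativity of all three discarded terms on the right-hand side of the equation ensures that the inequality direction is preserved under the limit $\varepsilon \to 0^+$.
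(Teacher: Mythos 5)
Your argument is correct, and while it shares its central tool with the paper --- testing against the first eigenfunction via Picone's inequality and exploiting $g(t)\geq C^{-1}t^{q}$ with $q>p-1$ --- the mechanism that produces the threshold is genuinely different. The paper applies Picone with $\phi_1^p/u_n^{p-1}$ (justified by the strong maximum principle) and then argues pointwise: it defines $\bar{\lambda}$ as the maximum of $\bigl(\lambda_1 u_n^{p-1}-g(u_n)-\mu_n\bigr)(u_n+1)^{\gamma}$, so that for $\lambda\geq\bar{\lambda}$ the integrand in the Picone inequality is strictly negative, contradicting its nonnegativity. You instead first derive the quantitative lower bound $u\geq c_1\lambda^{\alpha}W$ by comparing with the torsion function (after bounding the right-hand side below by $c_0\lambda^{q/(q+\gamma)}$ through a case split), and then let $\lambda\to\infty$ in the integral inequality $\lambda_1\int_\Omega\phi_1^p\geq C^{-1}\int_\Omega u^{q-p+1}\phi_1^p$. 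Your route buys two things: the $\varepsilon$-regularised test function $\phi_1^p/(u+\varepsilon)^{p-1}$ avoids having to legitimise $\phi_1^p/u^{p-1}$ itself as a test function, and your threshold is explicit and manifestly independent of $n$ and of the putative solution, whereas the paper's $\bar{\lambda}$ is written in terms of $u_n$ and $\mu_n$ (a point its proof leaves somewhat informal). The price is the extra comparison step, which requires the weak comparison principle and the observation that $\int_\Omega W^{q-p+1}\phi_1^p>0$; both are standard, and your restriction to $\lambda\geq 1$ in Step 1 is harmless since only large $\lambda$ matters.
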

	\begin{proof}
		Let $\lambda_1$ be the first eigenvalue of the operator $-\Delta_p$ and its corresponding eigenfunction $\phi_1\geq 0$ be such that
		\begin{eqnarray}
		-\Delta_p \phi_1&=& \lambda_1\phi_1^{p-1}~\text{in}~\Omega\nonumber,\\
		\phi_1&=&0~\text{on}~\partial\Omega.\nonumber
		\end{eqnarray}
		Its weak formulation with the test function $\phi=\phi_1$ is given by
		$$\int_{\Omega}|\nabla \phi_1|^p=\lambda_1\int_{\Omega}\phi_1^p.$$
		Let $u_n$ be the weak solution of $\eqref{e7}$, then by the strong maximum principle \cite{Vazquez}, we get $\frac{\phi_1^p}{u_n^{p-1}}\in W_0^{1,p}(\Omega)$. On applying the Picone's Identity (Theorem 2.1 in \cite{Bal}), we have
		\begin{align*}
		&\int_{\Omega} |\nabla\phi_1|^p dx-\int_{\Omega} \nabla(\frac{\phi_1^p}{u_n^{p-1}})|\nabla u_n|^{p-2} \nabla u_n dx  \geq 0\\
		 \Rightarrow &\int_{\Omega} \lambda_1\phi_1^p- \frac{\phi_1^p}{u_n^{p-1}}\cfrac{\lambda}{(u_n+\frac{1}{n})^\gamma}-g(u_n) \frac{\phi_1^p}{u_n^{p-1}}-\mu_n \frac{\phi_1^p}{u_n^{p-1}} dx \geq 0\\
		  \Rightarrow&\int_{\Omega} \left(\lambda_1 u_n^{p-1}- \lambda (u_n+\frac{1}{n})^{-\gamma}-g(u_n)-\mu_n\right)\phi_1^p dx \geq 0.
		\end{align*}
		Consider $\bar{\lambda}$ defined as $\bar{\lambda}=\underset{x\in\Omega}{\max}\cfrac{\lambda_1 u_n^{p-1}- g(u_n)-\mu_n}{(u_n+1)^{-\gamma}}$. Now for every $\epsilon>0$, there exists a $\delta>0$ such that $v^q < \epsilon v^{p-1}, \forall v\in[0,\delta]$. Therefore, $\bar{\lambda}>0$ for some $\epsilon$ and for $\lambda\geq\bar{\lambda}$, we have
		\begin{align}\lambda&\geq \max_{x\in\Omega}\frac{\lambda_1 u_n^{p-1}- g(u)-\mu_n}{(u+1)^{-\gamma}}\nonumber\\
		&\geq \frac{\lambda_1 u_n^{p-1}- g(u)-\mu_n}{(u+\frac{1}{n})^{-\gamma}}\nonumber\\
		&\Rightarrow\left(\lambda_1 u_n^{p-1}-\lambda \left(u+\frac{1}{n}\right)^{-\gamma}-g(u)-\mu_n\right)<0
		\end{align}
		which is a contradiction to our assumption. Hence, for $\lambda\geq\bar{\lambda}$, the problem $\eqref{e7}$ does not possess any solution $u\in W_0^{1,p}(\Omega)$.
	\end{proof}
	\begin{lemma}\label{l5}
		Let $\Omega$ be a strictly convex domain and $u_n$ be a solution of problem $\eqref{e7}$. Then there exists $M>0$, which does not depend on $n$, such that $||u_n||_\infty\leq M$.
	\end{lemma}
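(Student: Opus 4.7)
The plan is to argue by contradiction using a blow-up argument of the Gidas--Spruck type, exploiting the subcritical growth condition $q < N(p-1)/(N-p)$. Suppose that, along some subsequence, $M_n := \|u_n\|_\infty = u_n(x_n) \to \infty$, where $x_n \in \Omega$ is an interior maximum (the $C^1$ regularity from Lieberman, already used in Lemma \ref{l2}, justifies the pointwise attainment). The natural scale at which the model equation $-\Delta_p u = u^q$ is invariant suggests setting $\epsilon_n := M_n^{(p-1-q)/p}$, which tends to $0$ since $q > p-1$, and defining
\[
v_n(y) := \frac{u_n(x_n + \epsilon_n y)}{M_n}, \qquad y \in \Omega_n := \epsilon_n^{-1}(\Omega - x_n).
\]
Then $0 \leq v_n \leq 1$, $v_n(0)=1$, and a direct computation yields
\[
-\Delta_p v_n \;=\; \frac{\lambda\,\epsilon_n^p}{M_n^{p-1}\bigl(M_n v_n + \tfrac{1}{n}\bigr)^{\gamma}} \;+\; \frac{\epsilon_n^p}{M_n^{p-1}}\, g(M_n v_n) \;+\; \frac{\epsilon_n^p}{M_n^{p-1}}\,\mu_n(x_n + \epsilon_n y) \quad \text{on } \Omega_n.
\]

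The next step is to verify that, under this scaling, only the $g$-term survives in the limit. By the two-sided growth hypothesis on $g$, the second term is comparable to $v_n^q$ uniformly in $n$. The singular term equals $\lambda M_n^{-q-\gamma}(v_n + (nM_n)^{-1})^{-\gamma}$, which vanishes locally uniformly wherever $v_n$ stays bounded away from zero (in particular on a neighborhood of $y=0$, since $v_n(0)=1$). For the rescaled measure, a change of variables gives
\[
\bigl\|\epsilon_n^p M_n^{1-p}\mu_n(x_n + \epsilon_n\,\cdot)\bigr\|_{L^1(\Omega_n)} \;=\; \epsilon_n^{p-N} M_n^{1-p} \|\mu_n\|_{L^1(\Omega)},
\]
and substituting $\epsilon_n = M_n^{(p-1-q)/p}$ the exponent of $M_n$ simplifies to $(N-p)[q - N(p-1)/(N-p)]/p$, which is strictly negative precisely by the subcritical assumption on $q$; hence the rescaled measure tends to $0$ in $L^1_{loc}$.

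I then pass to the limit. Tolksdorf/Lieberman regularity applied on compact subsets of $\Omega_n$ gives $C^{1,\beta}_{loc}$-bounds on $v_n$ uniform in $n$, so along a subsequence $v_n \to v$ in $C^1_{loc}$ to some nonnegative $v$ with $v(0)=1$ and $\|v\|_\infty \leq 1$. Two scenarios arise according to $d_n := \mathrm{dist}(x_n,\partial\Omega)/\epsilon_n$. If $d_n \to \infty$, then $\Omega_n$ exhausts $\mathbb{R}^N$ and $v$ satisfies a differential inequality of the form $-\Delta_p v \geq C^{-1} v^q$ on all of $\mathbb{R}^N$; the Serrin--Zou / Mitidieri--Pohozaev Liouville theorem for the $p$-Laplacian in the range $p-1 < q < N(p-1)/(N-p)$ forces $v \equiv 0$. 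If instead $d_n$ remains bounded, then the $C^2$ regularity and strict convexity of $\partial\Omega$ ensure that $\Omega_n$ flattens to a half-space, on which $v$ solves the same inequality with $v \equiv 0$ on the bounding hyperplane, and the moving-plane version of the Liouville theorem (where strict convexity of $\Omega$ is essential to transfer the argument to the limiting half-space) again yields $v \equiv 0$. Either conclusion contradicts $v(0)=1$, so $\|u_n\|_\infty$ must be bounded uniformly in $n$.

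The principal difficulty lies in handling $\mu_n$: it is only bounded in $L^1$, so one has no uniform $L^s$ ($s>1$) control on the rescaled right-hand side, and the passage to the limit has to be done in a distributional/$L^1$ sense while preserving the local $C^{1,\beta}$ compactness. The subcritical exponent $q < N(p-1)/(N-p)$ is what makes the $L^1$-mass of the rescaled measure term tend to zero, and is therefore indispensable for the blow-up to close in this measure-valued framework.
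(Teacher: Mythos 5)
Your overall strategy coincides with Step 6 of the paper's proof: rescale at points where $M_n=\|u_n\|_\infty$ blows up (your $\epsilon_n=M_n^{(p-1-q)/p}$ is exactly the paper's $R_n$ with $R_n^{p/(q-p+1)}M_n=1$), check that the singular term and the measure term are negligible under this scaling, and contradict the Mitidieri--Pohozaev Liouville theorem in the subcritical range $p-1<q<N(p-1)/(N-p)$. The genuine gap is in how you dispose of blow-up near the boundary. The paper devotes Steps 1--5 (uniform Hopf lemma, absence of critical points in a boundary strip, moving planes in the strictly convex domain, monotonicity along the truncated cones $\mathbb{I}_x$, and the Picone-identity estimate with the first eigenfunction) precisely to prove a uniform bound $u_n\le C'$ on a strip $\Omega_\epsilon$, so that any blow-up is confined to a fixed compact subset and only the whole-space Liouville theorem (Theorem \ref{t1}) is ever invoked. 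You instead allow $d_n=\mathrm{dist}(x_n,\partial\Omega)/\epsilon_n$ to stay bounded and settle that case by appealing to a \emph{half-space} Liouville theorem for $-\Delta_p v\ge Cv^q$ with zero boundary data; no such result is stated or cited in the paper (Theorem \ref{t1} is for $\mathbb{R}^N$ only), and for the $p$-Laplacian the half-space Liouville property is a delicate matter that cannot simply be asserted. Your remark that strict convexity of $\Omega$ is ``essential to transfer the argument to the limiting half-space'' is also off target: convexity is irrelevant to the limiting half-space problem; in the paper (following Azizieh--Cl\'ement and de Figueiredo--Lions--Nussbaum) it is used to run the moving-plane method on $\Omega$ itself, before any blow-up, and that is exactly the step your argument omits.

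A second unresolved point, which you flag but do not close, is compactness. The Lieberman/Tolksdorf interior $C^{1,\beta}$ estimates you invoke require the rescaled right-hand side to be bounded (at least in a strong enough $L^s$ sense) on compact sets; $L^1$-smallness of $\epsilon_n^{p}M_n^{1-p}\mu_n(x_n+\epsilon_n\,\cdot)$ gives neither equicontinuity of $v_n$ nor local uniform convergence, and without $C^1_{loc}$ (or at least $C^0_{loc}$) convergence the normalization $v(0)=1$ does not survive the limit, which is the whole source of the contradiction. Relatedly, your argument that the singular term is harmless near $y=0$ because $v_n(0)=1$ presupposes the equicontinuity you are trying to establish; the cleaner route, and the one the paper uses to obtain \eqref{inequa1}, is the uniform local lower bound $u_n\ge C_K>0$ on compact subsets coming from Lemma \ref{l2} together with comparison, which bounds $\lambda(u_n+\frac{1}{n})^{-\gamma}$ on $\bar{\Omega}_d$ independently of $n$.
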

	\begin{proof}
	We divide the proof of this lemma into six steps.\\
		\textbf{Step 1 (Uniform H\"{o}pf Lemma)}. Our aim is to show that $\frac{\partial u_n}{\partial n}(x)< c <0$ for any $n\in \mathbb{N}$, where $c$ is some constant which is independent of $n$ but depends on $x$. $\hat{n}$ is the unit outward normal to the boundary $\partial\Omega$ at the point $x$.\\
		Now $\Omega$ satisfies the interior ball condition as it has a $C^2$ boundary, i.e. for some $x_0\in \partial\Omega$, there exists a $B_r(y)\subset\Omega$ such that $\partial B_r(y)\cap \partial\Omega =\{x_0\}$. Let us define $v: B_r(y)\rightarrow \mathbb{R}$ given by $$v(x)=[2^{\frac{N-p}{p-1}}-1]^{-1}r^{\frac{N-p}{p-1}}|x-y|^{\frac{p-N}{p-1}} - [2^{\frac{N-p}{p-1}}-1]^{-1}.$$ 
	We observe that,
		\begin{enumerate}
			\item[(i)] $v(x)= 1$ on $\partial B_{\frac{r}{2}}(y)$ and $v(x)=0$ on $\partial B_r(y)$, and
			\item[(ii)] if $x\in B_r(y)\setminus B_{\frac{r}{2}}(y)$ with $|\nabla v(x)|> c >0$ for some constant $c$ independent of $n$.
		\end{enumerate}
		 Therefore, we have $0< v(x)<1$. Let us define $m=\inf\{u_n(x)|x\in \partial B_{\frac{r}{2}}(y)\}.$ By using the Lemma $\ref{l2}$, we can conclude that $m>0$ and is independent of $n$. on choosing $w=mv$, we see that $w$ satisfies 
		\begin{eqnarray}\label{e8}
		-\Delta_p w&=& 0~\text{in}~ B_r(y)- \overline{B_{\frac{r}{2}}(y)} \nonumber,\\
		w&=&m~\text{if}~ x\in\partial B_{\frac{r}{2}}(y),\nonumber\\
		w&=&0~\text{if} x\in \partial B_r(y)\nonumber.
		\end{eqnarray}
		We have $u_n\geq w$ on the boundary of $B_r(y)- \overline{B_{\frac{r}{2}}(y)}$ and $-\Delta_p w\leq -\Delta_p u_n$ in $\Omega$. Hence, by the weak comparison principle, we have  $u_n\geq w$ in $B_r(y)- \overline{B_{\frac{r}{2}}(y)}$. Since, $u_n(x_0)=w(x_0)=0$, then from the properties of $v$ in (i) and (ii) above, we obtain
		\begin{align}
		\frac{\partial u_n}{\partial \hat{n}}(x_0) & =\lim_{t\rightarrow 0} \frac{u_n(x_0-t\hat{n})}{t} \leq \lim_{t\rightarrow 0}\frac{w(x_0-t\hat{n})}{t}\nonumber\\
		& = \frac{\partial w}{\partial\hat{n}}(x_0)= m \frac{\partial w}{\partial\hat{n}}< -c <0,\nonumber
		~\text{where $c>0$ is independent of $n$.}
		\end{align}
	\textbf{Step 2 (Existence of a neighbourhood of the boundary which does not contain any critical points of $u_n$)}. Let us denote $C(u_n)=\{x\in\Omega:\nabla u_n(x)=0\}$, as the set of critical points of $u_n$. From Step 1, we have $ \frac{\partial u_n}{\partial \eta}<0$ on the boundary. Hence, $dist(\partial\Omega, C(u_n))=b_n>0,~\forall\, n\in \mathbb{N}$ as $\partial\Omega$ and $C(u_n)$ are compact subsets in $\Omega$.\\
	\textbf{Claim:} There exists $\epsilon>0$, independent of $n$, such that $b_n>\epsilon>0$. In other words there exists a neighbourhood $\Omega_\epsilon=\{x\in\Omega: dist(x,\partial\Omega)<\epsilon\}$, such that $C(u_n)\cap\Omega_{\epsilon}=\phi$.\\
	{\it{Proof.}} We prove this by a contrapositive argument. Let there does not exist any such $\epsilon>0$ such that $C(u_n)\cap\Omega_{\epsilon}\neq\phi$. Then there exists $x_n\in C(u_n)$ such that $dist(x_n,\partial\Omega)\rightarrow 0$ as $n\rightarrow\infty$. Therefore, upto a subsequence $x_{n_k}\rightarrow x_0$ and  $x_0\in \partial\Omega$. But from Step 1, we obtain $ \frac{\partial u_n}{\partial \eta}(x_0) <c< 0$. Hence, there exists $l>0$ such that $|\nabla u_n(x)|>\frac{c}{2}$ for $x\in B_l(x_0)\cap\Omega$, where $c$ is independent of $n$. This implies that $B_l(x_0)\cap C(u_n)=\phi$. This is a contradiction, since we can find  $x_{n_0}\in B_l(x_0)\cap\Omega$ such that $\nabla u_{n_0}(x_{n_0})=0$. Hence the claim.
\newpage
\noindent\textbf{Step 3 (Monotonicity of $u_n$)}. Let $e\in \mathbb{S}^{N-1}, \delta\in \mathbb{R}$, then for a fixed $n\in \mathbb{N}$, we define the following
		\begin{enumerate}
		\item[(i)] The hyperplane $\mathbb{L}_{\delta,e}=\{x\in \mathbb{R}^N:x.e=\delta\}$ and $\mathbb{\sigma}_{\delta,e}=\{x\in \mathbb{R}^N:x.e<\delta\}$.
		\item[(ii)] $\hat{x}$ be the reflection of $x$ with respect to the hyperplane $\mathbb{L}_{\delta,e}$ i.e. $\hat{x}=x+2(\delta-x.e)e.$
		\item[(iii)] $a(e)=\underset{x\in \Omega}{\inf} \{x.e\}$ and the reflected cap of $\sigma_{\delta,e}$ with respect to $\mathbb{L}_{\delta,e}$ for any $\delta>a(e)$ denoted as $\hat{\sigma}_{\delta,e}$.
		\item[(iv)] $\hat{\sigma}_{\delta ,e}$ is not internally tangent to $\partial\Omega $ at some point $p\notin\mathbb{L}_{\delta ,e}$.
		\item[(v)] $\hat{n}(x)$ be the unit inward normal to $\partial\Omega$ at $x$, then $\hat{n}(x).e\neq0, \forall x\in \partial\Omega\cap \mathbb{L}_{\delta ,e}$.
		\item[(vi)] $\xi(e)=\{\mu_0>a(e): \forall\delta\in (a(e),\mu_0),  \text{4 and 5 holds}\}.$ and $\bar{\xi}(e)=\sup\{\xi(e)\}.$
		\end{enumerate}
		If $\Omega$ is strictly convex, then the map $e\mapsto \bar{\xi}(e)$ is continuous by Proposition 2 of \cite{Azizieh Lema}. Let us denote $v_n(x)=u_n(\hat{x})$. Considering the strict convexity of $\Omega$ and the property (4), we see that $\hat{\sigma}_{\delta ,e}$ is contained in $\Omega$ for any $\delta\leq \delta_1$ where $\delta_1$ only depends on $\Omega$. Since, $\Delta_p$ is invariant under reflection and both $u_n$ and $v_n$ satisfy equation $\eqref{e7}$ hence both the functions take the same value on the hyperplane $\mathbb{L}_{\delta , e}$. Let us define $\delta_0=\min(\delta_1, \epsilon)$. Also for $x\in \partial\Omega\cap\partial\sigma_{\delta ,e}$, we have $u_n(x)=0$ and $v_n(x)=u_n(\hat{x})>0$ as $\hat{x}\in\Omega$. Therefore,
		\begin{eqnarray}
		-\Delta_p u_n+\cfrac{\lambda}{(u_n+\frac{1}{n})^\gamma}+g(u_n)+\mu_n&=& -\Delta_p v_n+\cfrac{\lambda}{(v_n+\frac{1}{n})^\gamma}+g(v_n)+\mu_n~\text{in}~\sigma_{\delta ,e}\nonumber\\
		u_n&\leq&v_n~\text{on}~\partial\sigma_{\delta ,e}\cap\partial\Omega.\nonumber
		\end{eqnarray}
		Then $u_n\leq v_n$ in $\sigma_{\delta ,e}$ for any $\delta\in(a(e), \delta_0)$, by the comparison principle \cite{Damascelli}. Hence, $u_n$ is nondecreasing for all $x\in\sigma_{\delta_0,e}$ along the $e$-direction.\\
		\noindent\textbf{Step 4} ({\bf Existence of a measurable proper subset of $\Omega$ of nonzero measure on which $u$ is nondecreasing}).  For a fixed $x_0\in\partial\Omega$, let $e=e(x_0)$ be the unit outward normal to $\partial\Omega$ at $x_0$. Then by the results in Step 3, we conclude that $u_n$ is nondecreasing in the direction of $e$ for all $x\in\sigma_{\delta ,e}$ and $a(e)<\delta<\delta_0$. For any $\theta\in\mathbb{S}^{N-1}$ in a small neighbourhood of $e$, the reflection of $\sigma_{\delta ,\theta}$ w.r.t. $\mathbb{L}_{\delta,\theta}$ is a member of $\Omega$, since the domain is strictly convex and hence the sequence $u_n$ will be nondecreasing in the $\theta$ direction. Fix $\delta=\frac{\delta_0}{2}$. Since $\Omega$ is strictly convex, there exists a neighbourhood $\Theta\in\mathbb{S}^{N-1}$ such that $\sigma_{\frac{\delta_0}{2}, e}\subset\sigma_{\delta_0,\theta}$ for all $\theta\in\Theta$. Thus, we can conclude that $u_n$ is nondecreasing in every direction for $\theta\in\Theta$ and for any $x$ with $x\cdot e<\frac{\delta_0}{2}$.\\
		Consider $$\sigma_0=\left\{x\in\Omega : \frac{\delta_0}{8}<x\cdot e<\frac{3\delta_0}{8}\right\}.$$ Obviously, $\sigma_0\subset\sigma_{\frac{\delta_0}{2}, e}$ and $u_n$ is nondecreasing in every direction $\theta\in\Theta$ and $x\in\sigma_0$.	 Choose $\epsilon=\frac{\delta_0}{8}$ and fix a point $x\in\Omega_\epsilon$. Let $x_0$ be the projection of the point $x$ onto $\partial\Omega$. We define $\mathbb{I}_x\subset\sigma_0$ to be the truncated cone having vertex at $x_0-\epsilon e$ and an opening angle $\frac{\theta}{2}$. Then $\mathbb{I}_x$ satisfies the following properties.
		\begin{enumerate}
			\item[(i)] $|\mathbb{I}_x|>k$ for some $k$, where $k$ depends only on $\Omega$ and $\epsilon$,
			\item[(ii)] $u_n(x)\leq u_n(y)$ for all $y\in\mathbb{I}_x$ and $n\in\mathbb{N}$.
		\end{enumerate}
	Then, we have $u_n(x)\leq u_n(x_0-\epsilon e)\leq u_n(y)$, for all $y\in\mathbb{I}_x$.\\
		\noindent\textbf{Step 5 (A boundary `estimate')}. Let us consider the first eigenfunction $\phi_1$ of the $p$-Laplacian eigenvalue problem over $\Omega$. Using the Picone's identity on $\phi_1$, $u_n$ and then applying the strong maximum principle \cite{Vazquez}, we have $\frac{\phi_1^p}{u_n^{p-1}}\in W_0^{1,p}(\Omega)$. Denote $f_n(u_n)=\frac{\lambda}{(u_n+\frac{1}{n})^\gamma}+\mu_n$. Then, we have
		\begin{eqnarray}\label{e9}
		\begin{split}
		\int_\Omega\cfrac{[f_n(u_n)+g(u_n)]\phi_1^p}{u_n^{p-1}}&=\int_\Omega |\nabla u_n|^{p-2}\nabla u_n\cdot\nabla \left(\frac{\phi_1^p}{u_n^{p-1}}\right)\\&\leq\int_\Omega|\nabla \phi_1|^p dx\\
		&\leq C(\Omega).
		\end{split}
		\end{eqnarray}
		Let $\phi_1(z)\geq\xi>0$ for all $z\in\Omega-\Omega_{\frac{\epsilon}{2}}$. Hence, from $\eqref{e9}$, we have
		$$\xi^p \int_{\Omega-\Omega_{\frac{\epsilon}{2}}} \cfrac{[f_n(u_n)+g(u_n)]}{u_n^{p-1}}\leq C(\Omega).$$ This implies
		$$\int_{\mathbb{I}_x}\cfrac{[f_n(u_n)+g(u_n)]}{u_n^{p-1}}\leq\frac{C(\Omega)}{\xi^p}.$$
		Now since, 
		\begin{eqnarray}
		\int_{\mathbb{I}_x}\cfrac{[f_n(u_n)+g(u_n)]}{u_n^{p-1}}\geq\int_{\mathbb{I}_x} g(u_n) u_n^{1-p}(z)dz\geq u_n^{q-p+1}(x)|\mathbb{I}_x|
		\end{eqnarray}
		we have
		$$u_n^{q-p+1}(x)\leq\frac{C_1(\Omega)}{\xi^p},$$ for some constant $C_1>0,$ i.e. $u_n(x)\leq C'$, for all $x\in\Omega_\epsilon$ and for all $n\in\mathbb{N}.$\\
		\noindent\textbf{Step 6 (Blow-up analysis)}. We will show that for every open set, $K\subset\subset\Omega,$ there exists $C_K>0$ such that $\|u_n\|_\infty<C_K,$ for every solution $u_n$ of $\eqref{e7}.$ We will prove it by contrapositive argument. Suppose, there exist a sequence $(u_n)$ of positive solutions of the problem $\eqref{e7}$ and a sequence of points $(Z_n)\subset\Omega$ such that $M_n=u_n(Z_n)=\max\{u_n(x):x\in\bar{K}\}\rightarrow\infty$ as $n\rightarrow\infty.$ Using the boundary estimates one can assume that $Z_n\rightarrow x_0$ as $n\rightarrow\infty$, where $x_0\in \bar{K}$.
		Let $dist(\bar{K},\partial\Omega) =2d$ and $\Omega_d=\{x\in\Omega : dist(x,\Omega)<d\}.$\\
		Let $R_n$ be the sequence of positive real numbers with $R_n^{\frac{p}{q-p+1}}M_n=1.$ Observe that $M_n\rightarrow\infty$ iff $R_n\rightarrow 0$ as $n\rightarrow\infty.$ Define, $w_n:B_{\frac{d}{R_n}(0)\rightarrow\mathbb{R}}$ such that $$w_n(y)=R_n^{\frac{p}{q-p+1}}u_n(Z_n+R_n y).$$ Now $u_n$ has a maximum at $Z_n$, hence we have $\|w_n\|_\infty=w_n(0)=1.$ Since $R_n\rightarrow 0$ there exists $n_0 $ such that $B_R(0)\subset B_{\frac{d}{R_n}}(0)$ for fixed $R>0.$ Again, we have that $w_n$ satisfies the following
		$$\nabla w_n(y)=R_n^{\frac{p}{q-p+1}+1}\nabla u_n(Z_n+R_n y)$$ and 
		$$-\Delta_p w_n(y)=R_n^{\frac{pq}{q-p+1}}[\lambda f_n(u_n(Z_n+R_n y))+R_n^{\frac{-pq}{q-p+1}} w_n^q(Z_n+R_n y) +R_n^{\frac{-pq}{q-p+1}}\mu_n(Z_n+R_n y)].$$ 
		From  Lemma $\eqref{l1}$ and Lemma $\eqref{l3}$, for any $y\in B_R(0)$, we have $Z_n+R_n y\in\bar{\Omega}_d\subset\Omega$ and
		\begin{align}\label{inequa1}
		\begin{split}
		&R_n^{\frac{pq}{q-p+1}}[\lambda f_n(u_n(Z_n+R_n y))+R_n^{\frac{-pq}{q-p+1}} w_n^q(Z_n+R_n y) +R_n^{\frac{-pq}{q-p+1}}\mu_n(Z_n+R_n y)]\leq C(\bar{\Omega}_d),
		\end{split}
		\end{align} for every $n\geq n_0.$ Let us fix a ball $B$ such that $\bar B\subset B_{\frac{d}{R_n}}(0),~\forall~n\geq n_0.$ Then by the interior estimates of Lieberman \cite{Lieberman} and Tolksdorf \cite{Tolksdorf}, we have the existence of a constant $C=C(N,p,B)>0$ and $\beta=\beta(N,p,B)\in(0,1)$ such that
		$$w_n\in C^{1,\beta}(\bar B)~ \text{and}~ ||w_n||_{1,\beta}\leq C.$$
		Using the  Arzela-Ascoli theorem, we guarantee the existence of a function $w\in C^1(\bar B)$ such that there exists a convergent subsequence $w_n\rightarrow w$ in $C^1(\bar B).$ On passing the limit $n\rightarrow\infty,$ we have
		\begin{align*}
		&\int_B|\nabla w|^{p-2}\nabla w\cdot\nabla\phi\geq C\int_B w^q\phi, ~\forall~\phi\in C_c^\infty(B),~ w\in C^1(\bar B),~ w\geq 0 ~\text{on}~\bar B,
		\end{align*} 
		where the constant is obtained from the growth condition over $g$ and the condition in \eqref{inequa1}. Also, we have $||w||_\infty=1.$ Hence, by using the strong maximum principle \cite{Vazquez}, we have $w(x)>0,$ $\forall~x\in B$. Now for a sequence of balls with increasing radius, the Cantor diagonal subsequence converges to $w\in C^1(\mathbb{R}^N)$, on every compact subsets of $\mathbb{R}^N$ and satisfy the following
		\begin{align*}
		&\int_{\mathbb{R}^N}|\nabla w|^{p-2}\nabla w\cdot\nabla\phi\geq C\int_{\mathbb{R}^N} w^q\phi; ~\forall~\phi\in C_c^\infty(\mathbb{R}^N),~w\in C^1(\mathbb{R}^N),~ w>0 ~\text{on}~\mathbb{R}^N.
		\end{align*}
		This contradicts the Theorem $\ref{t1}$.
	\end{proof}
	\begin{lemma}\label{l6}
		For a strictly convex domain $\Omega$, there exists  $\bar\lambda>0$ such that for $0<\lambda<\bar{\lambda}$ and $\gamma>0$ atleast two solutions (say $u_n,v_n$) exist for the problem $\eqref{e7}$ in $W_{loc}^{1,p}(\Omega)$.
	\end{lemma}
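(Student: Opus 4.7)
The plan is to produce two distinct weak solutions of $(P_n)$ by the familiar Ambrosetti--Brezis--Cerami device adapted to the $p$-Laplacian: a first solution as a local minimizer of a smooth energy functional and a second solution by the mountain pass theorem. Because the singularity in \eqref{e7} is cut off by the $\tfrac{1}{n}$-shift and $\mu_n$ is smooth, the natural energy functional
\[
J_n(u) \;=\; \frac{1}{p}\int_{\Omega}|\nabla u|^p\,dx \;-\; \frac{\lambda}{1-\gamma}\int_{\Omega}\Bigl[(u^{+}+\tfrac1n)^{1-\gamma} - (\tfrac1n)^{1-\gamma}\Bigr]dx \;-\; \int_\Omega G(u^{+})\,dx \;-\; \int_\Omega u\,\mu_n\,dx,
\]
with $G(t)=\int_0^t g(s)\,ds$ (and the obvious logarithmic modification when $\gamma=1$), is of class $C^1$ on $W_0^{1,p}(\Omega)$, and its nonnegative critical points coincide with the weak solutions of \eqref{e7}; strict positivity is then secured by the strong maximum principle of \cite{Vazquez}.

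For the first solution I would use a sub-/super-solution argument. Let $\underline{u}_n$ denote the positive solution of the purely singular regularized equation $-\Delta_p u = \lambda/(u+\tfrac{1}{n})^\gamma$ furnished by Lemma~\ref{l1}; it is a subsolution of \eqref{e7}. Taking $\lambda$ strictly below the non-existence threshold $\bar\lambda$ of Lemma~\ref{l4}, construct a bounded super-solution $\bar u_n\geq\underline{u}_n$ by scaling an appropriate first eigenfunction or by a direct barrier built from the bound $g(t)\leq Ct^q$. On the order interval $\mathcal{K}=[\underline{u}_n,\bar u_n]\cap W_0^{1,p}(\Omega)$, which is weakly closed, convex and bounded, the coercive and weakly lower semi-continuous restriction of $J_n$ attains a minimizer $u_n$. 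A Brezis--Nirenberg style truncation argument, relying on Lieberman's $C^{1,\beta}$-regularity \cite{Lieberman} and the uniform interior positivity $u_n\geq C_K>0$ of Lemma~\ref{l2}, upgrades $u_n$ to a local minimizer of $J_n$ in the ambient $W_0^{1,p}$-topology.

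For the second solution I would verify mountain pass geometry about $u_n$. The superlinear bound $tg(t)\geq C^{-1}t^{1+q}$ with $q+1>p$ forces $J_n(u_n+t\phi_1)\to-\infty$ as $t\to\infty$ along the first eigenfunction $\phi_1$, supplying a far point $e$ with $J_n(e)<J_n(u_n)$. Setting
\[
c_n \;=\; \inf_{\eta\in\Gamma}\max_{t\in[0,1]} J_n(\eta(t)), \qquad \Gamma=\{\eta\in C([0,1];W_0^{1,p}(\Omega))\colon \eta(0)=u_n,\;\eta(1)=e\},
\]
one obtains $c_n>J_n(u_n)$. Boundedness of any $(PS)_{c_n}$-sequence comes from the standard combination $pJ_n(u)-\langle J_n'(u),u\rangle$, the growth bounds on $g$, and the subcritical condition $q<N(p-1)/(N-p)$; strong convergence then follows from the $(S_+)$-property of $-\Delta_p$ together with the compact embedding $W_0^{1,p}(\Omega)\hookrightarrow L^{q+1}(\Omega)$. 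The mountain pass theorem hands us a critical point $v_n$ at level $c_n$, so $v_n\neq u_n$, and the strong maximum principle again gives $v_n>0$.

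The principal obstacle is upgrading the order-interval minimizer to a genuine $W_0^{1,p}$-local minimizer, since the classical Brezis--Nirenberg device was formulated in a Hilbert setting and must be re-derived in the non-Hilbertian $p$-Laplacian framework. The essential inputs are the uniform interior positivity of Lemma~\ref{l2}, the uniform $L^\infty$ a priori bound of Lemma~\ref{l5}, and Lieberman's $C^{1,\beta}$-regularity, which together legitimize the key truncations $(u-\bar u_n)^+$ and $(\underline u_n-u)^+$. A secondary technicality is verifying Palais--Smale with $\mu_n$ in play; this is benign because for fixed $n$, $\mu_n$ is smooth and $L^1$-bounded, so it contributes only a weakly continuous linear perturbation that does not disturb the compactness.
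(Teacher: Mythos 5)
Your plan (order-interval minimizer plus mountain pass for the regularized functional) is a genuinely different route from the paper's, which is degree-theoretic rather than variational: the paper builds a supersolution by solving the auxiliary singular problem $-\Delta_p z=\lambda^*(z+\tfrac1n)^{-\gamma}+\mu_n$, uses the D\'{i}az--Sa\'{a} argument \cite{Diaz} to get uniqueness and an $L^\infty$ bound $\beta_2$ for it, and then extracts two solutions of \eqref{e7} from a continuum supplied by the Ambrosetti--Arcoya lemma (Lemma \ref{arcoya lemma}) together with the degree computations of De Figueiredo--Lions--Nussbaum (Lemma \ref{figueiredo}), the nonexistence threshold of Lemma \ref{l4} and the a priori bound of Lemma \ref{l5}. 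That construction yields the quantitative separation $\|u_n\|_\infty\le\beta_2$ and $\|v_n\|_\infty\ge\beta_2+\epsilon$ \emph{uniformly in $n$}, which is precisely what is used afterwards to keep the two limits distinct. Your approach, even if completed, only gives $u_n\neq v_n$ for each fixed $n$ and provides no $n$-independent separation.

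Beyond that structural difference, two steps in your sketch are genuinely gapped. First, the supersolution: a barrier obtained by ``scaling the first eigenfunction'' or directly from $g(t)\le Ct^q$ ignores $\mu_n$. For fixed $n$ the datum $\mu_n$ is smooth but only uniformly bounded in $L^1(\Omega)$; its sup-norm may be arbitrarily large and is not controlled by $\lambda$. Since $q>p-1$, the function $M\mapsto M^{p-1}-CM^q\|w\|_\infty^q$ is bounded above, so the required inequality $M^{p-1}\ge \lambda n^{\gamma}+CM^{q}\|w\|_\infty^{q}+\|\mu_n\|_{\infty}$ can fail for every $M$ no matter how small $\lambda$ is; smallness of $\lambda$ simply does not absorb $\mu_n$. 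The paper sidesteps this by letting the supersolution carry $\mu_n$ exactly (it solves the auxiliary problem with $\mu_n$ on the right-hand side), so only the $g$-term must be absorbed. Second, Palais--Smale: the hypothesis $C^{-1}t^{1+q}\le tg(t)\le Ct^{1+q}$ does not imply an Ambrosetti--Rabinowitz condition $\theta G(t)\le tg(t)$ with $\theta>p$; it only yields $tg(t)\ge \frac{q+1}{C^{2}}G(t)$, and $\frac{q+1}{C^{2}}$ may well be below $p$ when $C$ is large, so boundedness of $(PS)_{c_n}$ sequences via $pJ_n(u)-\langle J_n'(u),u\rangle$ is not justified as stated. (A smaller point: $c_n>J_n(u_n)$ needs either strictness of the local minimum or the standard dichotomy that a nonstrict minimizer already produces a second critical point.) Unless these two points are repaired, the proposal does not prove the lemma, and even repaired it proves a weaker statement than the one the paper actually needs downstream.
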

		\begin{proof}
			We define $\bar{J_\lambda}:C(\bar{\Omega})\rightarrow C(\bar{\Omega})$ by $$\bar{J_\lambda}(u)=(-\Delta_p)^{-1}\left(\frac{\lambda}{(u+\frac{1}{n})^\gamma}+g(u)+\mu_n\right), ~\lambda\geq0.$$
			Now equation $\eqref{e7}$ can be written as $u=\bar{J_\lambda}(u)$. The map $\bar{J_\lambda}$ is compact since, we know $(-\Delta_p)^{-1}$ is a compact operator on $C(\bar{\Omega})$. So, we assume the map $\bar{J_\lambda}$ is also compact. For $0<\lambda<\bar{\lambda}$, we have $(u_n)$ as solutions to the problem $\eqref{e7}$ and $||u_n||_\infty\leq M$, using Lemma $\ref{l4}$ and Lemma $\ref{l5}$. Let us define, $S_1=\{u\in C(\bar{\Omega}):u\geq0~ \text{in}~ \Omega\}$, $\bar{J_0}:S_1\rightarrow S_1$ by $ \bar{J_0}(u)=(-\Delta_p)^{-1}(g(u)+\mu_n)$ and $G:\bar{B_R}\times [0,\infty)\rightarrow S_1$ such that $G(u,\lambda)=\bar{J_\lambda}$.\\
			$\textit{\bf Claim 1.}$ There exists a supersolution to the problem $\eqref{e7}$.\\
		{\it Proof.} Let us define, $N(r)=\frac{1}{3}\left((\frac{r}{R})^{\gamma+p-1}-Cr^{\gamma+q}\right)$, for $r\in[0,\infty)$ where $R$ is the bound used in Lemma $\ref{l2}$ and $C>0$ is the constant used in the growth condition of $g$ and $\eta=\underset{0\leq r\leq\beta_0}{\max}N(r)$, where $\beta_0=\frac{1}{2} (2q-2p+3)^{\frac{1}{p-q-1}}R^{\frac{\gamma+p-1}{p-q-1}}.$\\
			Observe that, $N(r)>0$ for $r\in(0,\beta_1)$, where $\beta_1\in(0,\min{(\gamma,\beta_0)}).$ Now applying the intermediate value property of continuous functions, we get that there exists a $\beta_2\in(0,\beta_1)$ such that $N(\beta_2)=\lambda_0.$ \\
			Denote $\lambda^*=\left(\frac{\beta_2}{R}\right)^{\gamma+p-1}$. So,
			\begin{align}
			\lambda_0=N(\beta_2) & =  \frac{1}{2}\left(\lambda^*-C\beta_2^{\gamma+q}\right)\nonumber\\
			\lambda^*> \lambda_0+\beta_2^{\gamma+q} & =\lambda_0+C[R(\lambda^*)^{\frac{1}{\gamma+p-1}}]^{\gamma+q}\nonumber.
			\end{align}
			Let $u_{n,\lambda^*}$ satisfy $\eqref{e7}$. Then for $n\geq n_0$, we have 
			\begin{align}
			\lambda^* & >\lambda_0 + C\left(\|u_{n,\lambda^*}\|_\infty\right)^q
			\left(\|u_{n,\lambda^*}\|+\frac{1}{n}\right)^\gamma \nonumber	\\&
			>\lambda + C\left(u_{n,\lambda^*}\right)^q
			\left(u_{n,\lambda^*}+\frac{1}{n}\right)^\gamma, ~\text{for}~ \lambda\leq\lambda_0 \\&
			> \lambda + g(u_{n,\lambda^*}) \left(u_{n,\lambda^*}+\frac{1}{n}\right)^\gamma.
			\end{align}
			Hence, 
			$$-\Delta_p u_{n,\lambda^*}=\frac{\lambda^*}{(u_{n,\lambda^*}+\frac{1}{n})^\gamma}+\mu_n> \frac{\lambda}{(u_{n,\lambda^*}+\frac{1}{n})^\gamma}+\mu_n+g(u_{n,\lambda^*}), ~\text{for}~ \lambda\leq\lambda^* ~\text{and}~ n\geq n_0.$$
			Therefore, $u_{n,\lambda^*}\in C^{1,\alpha}(\bar{\Omega})$ is a positive supersolution for some $\alpha>0$ and $u_{n,\lambda^*}$ is a supersolution of 
			\begin{eqnarray}
			\label{mideq1}
			-\Delta_p u&=&\frac{\lambda}{(u+\frac{1}{n})^\gamma}+g(u)+\mu_n,\nonumber\\
			u&=& 0~\text{on}~\partial\Omega,
			\end{eqnarray}
			with $\|u_{n,\lambda^*}\|_\infty\leq \beta_2$.\\
				$\textit{\bf Claim 2.}$ Problem $\eqref{e7}$ possesses a unique solution.\\
				To prove the Claim 2, we define, $$f_n(x,r)=\frac{\lambda(r+\frac{1}{n})^{-\gamma}+g(r)}{r^{p-1}}, ~\text{for}~ r\in [0,\infty).$$ 
				Now the derivative of $f_n$ w.r.t $r$ is given by
				\begin{align}
					f_n^\prime(x,r) &=\frac{1}{r^p}\left[\frac{\lambda\{(1-p-\gamma)r+\frac{1-p}{n}\}}{(r+\frac{1}{n})^{1+\gamma}}\right]+\frac{rg^\prime(r)-g(r)(p-1)}{r^p}\nonumber\\&
					<\frac{1}{r^p}\left[\frac{\lambda[(1-p-\gamma)r+\frac{1-p}{n}]}{(r+\frac{1}{n})^{1+\gamma}}\right]+ (q-p+1)r^{q-p}.\nonumber
				\end{align}
As the function $r^q(r+\frac{1}{n})^{1+\gamma}$ is convex, so there exists a unique $C_n>0$, which is increasing with respect to $\lambda$ such that 
				$$\lambda\left[(p+\gamma-1)C_n+\frac{p-1}{n}\right]> (q-p+1)C_n^{q}(C_n+\frac{1}{n})^{1+\gamma}.$$ Now for $r\leq C_n$, we have 
				$$(q-p+1)r^{q}(r+\frac{1}{n})^{1+\gamma}\leq\lambda\left[(p+\gamma-1)r+\frac{p-1}{n}\right].$$
				Hence, $f_n^\prime(x,r)<0$. Consider $$F_n(x,r)= \frac{\lambda(r+\frac{1}{n})^{-\gamma}+g(r)+\mu_n}{r^{p-1}}, ~\text{for}~ r\in [0,\infty).$$
				Clearly, $F_n^\prime(x,r)=f_n^\prime(x,r)-\frac{\mu_n(p-1)}{r^p}<0.$ Therefore, $F_n$ is decreasing and using the result of D\'{i}az-Sa\'{a} \cite{Diaz}, we guarantee that the problem $\eqref{e7}$ has unique solution and $||u_n||_\infty\leq C_n$.\\
				Thus, we have $\beta_2\leq\delta_0$. So, $$\frac{q-p+1}{\gamma+p-1}\beta_2^{\gamma+q}<\lambda_0,~\text{for}~ \gamma>1.$$
				Choose $$\lambda_m=\frac{\{(q-p+1)(\beta_2+\epsilon)^{q}-\mu_m(p-1)\} (\beta_2+\epsilon+\frac{1}{m})^{1+\gamma}}{(p+\gamma-1)(\beta_2+\epsilon)+\frac{p-1}{m}}<\lambda_0,$$	
				then for all $n\geq m$, we have $C_n(\lambda_0)\geq C_n(\lambda_n)=\beta_2+\epsilon.$ So, $||u_n||_\infty\leq\beta_2+\epsilon.$\\
			We can see that using Lemma $\ref{l3}$, Lemma $\ref{l4}$ and Lemma $\ref{l5}$, $\bar{J_0}$ and $G$ satisfy all the conditions of Lemma \eqref{figueiredo} taken from \cite{Figueiredo} for some $0<r<\beta_2<R$. Since $\beta_2<\alpha$, $(I-\bar{J_0})(u)$ has no solution on $\partial B_r$. Now considering Lemma \ref{l4} and using Lemma \ref{arcoya lemma} of \cite{Ambrosetti}, we can obtain a continuum $A_n\subset A=\{(\lambda,u)\in[0,\bar{\lambda}]\times C(\bar{\Omega}): u-\bar{J_\lambda}(u)=0\}$ such that
			\begin{equation}\label{eq2}
			A_n\cap (\{0\}\times B_r)\neq \phi,~ A_n\cap (\{0\}\times (B_R-B_r))\neq \phi.
			\end{equation}
			Next, we define $F:[0,\lambda_0]\rightarrow C_0^{1,\alpha}(\bar{\Omega})$ a continuous map such that $F(\lambda)=u_{n,\lambda^*}$. Using Lemma $\ref{l8}$, we conclude that there exists $u_n\in A_n^{\lambda_0}=\{u\in C(\bar{\Omega}):(\lambda_0,u)\in A_n\}$ such that $0<u_n<u_{n,\lambda^*}$. We have $||u_{n,\lambda^*}||_\infty\leq\beta_2$ and hence $||u_n||_\infty\leq ||u_{n,\lambda^*}||_\infty\leq \beta_2.$\\
			We have $A_n\cap(\{0\}\times(B_R-B_r))\neq\phi$ by equation $\eqref{eq2}$. Hence, for $n\geq\max(n_0,m)$, there exists $v_n$ such that $||v_n||_\infty\geq \beta_2+\epsilon.$ For $\lambda=\lambda_0$ we have at least two solutions $u_n$ and $v_n$ to the problem $\eqref{e7}$. As $\lambda_0<\bar{\lambda}$ is arbitrary, it concludes the proof.
			\end{proof}
	\begin{theorem}
	Given $\gamma>0$ there exists $\bar{\lambda}>0$ such that the problem \eqref{e0} admits atleast two solutions $u$, $v$ in $W_{loc}^{1,p}(\Omega)$, provided $\Omega$ is strictly convex with $1<p<N$, $p-1<q<\frac{p(N-1)}{N-p}-1$ and for $0<\lambda<\bar{\lambda}$.
	\end{theorem}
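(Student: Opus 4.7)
The plan is to lift the two solutions produced at the approximated level in Lemma \ref{l6} to two distinct weak solutions of $(P)$ by a limit passage in $(P_n)$, relying on the uniform estimates already in place.

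First, for any $\lambda\in(0,\bar{\lambda})$ with $\bar{\lambda}$ being the threshold from Lemmas \ref{l4} and \ref{l6}, I would select for each $n$ the pair $(u_n,v_n)$ of solutions of $(P_n)$ constructed in Lemma \ref{l6}, obeying $\|u_n\|_\infty\le\beta_2$ and $\|v_n\|_\infty\ge\beta_2+\epsilon$. Lemma \ref{l5} gives a uniform $L^\infty$ bound on $(v_n)$ (and trivially on $(u_n)$), while Lemma \ref{l2} furnishes the pointwise lower bound $u_n,v_n\ge C_K>0$ on every $K\subset\subset\Omega$, independently of $n$. This lower bound is crucial: on any such $K$ the singular term $\lambda/(u_n+1/n)^\gamma$ is controlled by $\lambda/C_K^\gamma$, a constant.

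Second, I would derive uniform $W^{1,p}_{loc}$--estimates for $(u_n)$ and $(v_n)$. Fix $\eta\in C_c^\infty(\Omega)$, $0\le\eta\le1$, supported in some $K\subset\subset\Omega$, and test the weak formulation \eqref{weak} with $u_n\eta^p$; the $L^\infty$--bound on $u_n$, the growth condition on $g$, the $L^\infty_{loc}$ control of the singular term guaranteed by the step above, and the $L^1$--boundedness of $\mu_n$ together yield $\int|\nabla u_n|^p\eta^p\le C(K)$ uniformly in $n$. For the finer control needed because the data live in $L^1+\mathbb{M}(\Omega)$, I would follow the Marcinkiewicz scheme via \eqref{marcin} used in \cite{pgc}: testing with $T_k(u_n)$ and $G_k(u_n)$ produces uniform estimates placing $|\nabla u_n|$ into $\mathcal{M}^{\frac{N(p-1)}{N-1}}_{loc}(\Omega)$, which is exactly the range permitted by the hypothesis $q<\frac{p(N-1)}{N-p}-1$.

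Third, from the uniform estimates I would extract subsequences with $u_n\rightharpoonup u$ and $v_n\rightharpoonup v$ weakly in $W^{1,p}_{loc}(\Omega)$, a.e.\ in $\Omega$, with $u,v\in W^{1,p}_{loc}(\Omega)\cap L^\infty(\Omega)$. An almost everywhere gradient convergence $\nabla u_n\to\nabla u$ (and similarly for $v_n$) is obtained by a Boccardo--Murat truncation/monotonicity argument adapted to the $p$--Laplacian with mixed singular, superlinear, and measure data, exactly as in \cite{pgc}. Once gradient a.e.\ convergence is available, passing to the limit in \eqref{weak} for $\phi\in C_c^\infty(\Omega')$, $\Omega'\subset\subset\Omega$, is routine: $|\nabla u_n|^{p-2}\nabla u_n\to|\nabla u|^{p-2}\nabla u$ weakly in $L^{p'}_{loc}$; the singular term converges by dominated convergence (using $u_n\ge C_{\mathrm{supp}\,\phi}$); the term $g(u_n)\phi$ converges by continuity and the $L^\infty$ bound; and $\int\phi\,\mu_n\to\int\phi\,d\mu$ by Definition \ref{measure}. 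Hence $u$ and $v$ are both weak solutions of $(P)$ in the sense of \eqref{e1}.

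Finally, distinctness is inherited from the approximation: since $\|u_n\|_\infty\le\beta_2$ and $\|v_n\|_\infty\ge\beta_2+\epsilon$ for every sufficiently large $n$, the pointwise limits satisfy $\|u\|_\infty\le\beta_2<\beta_2+\epsilon\le\|v\|_\infty$, so $u\ne v$. I expect the principal technical obstacle to be the a.e.\ convergence of the gradients, because the simultaneous presence of the singular term $\lambda/(u_n+1/n)^\gamma$, the superlinear nonlinearity $g(u_n)$, and the $L^1$ datum $\mu_n$ complicates the standard monotonicity test; I would circumvent this by the truncation splitting $u_n=T_k(u_n)+G_k(u_n)$, controlling $\int_{\{u_n>k\}}|\nabla u_n|^p$ uniformly in $n$ via the Marcinkiewicz estimate and then applying the Leray--Lions type argument on $T_k(u_n)$ on the compactly supported region, in the spirit of \cite{pgc}.
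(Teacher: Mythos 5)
Your proposal follows essentially the same route as the paper's proof: take the two approximate solutions from Lemma \ref{l6}, use the uniform $L^\infty$ bound of Lemma \ref{l5} and the local lower bound to get uniform $W^{1,p}_{loc}$ estimates by testing with $z_n\phi^p$, pass to the limit in the principal part (the paper cites Theorem 4.4 of \cite{Canino} where you invoke a Boccardo--Murat truncation argument), treat the singular, superlinear and measure terms by dominated convergence and Definition \ref{measure} exactly as you describe, and separate the two limits via $\|u_n\|_\infty\le\beta_2<\beta_2+\epsilon\le\|v_n\|_\infty$. The only cosmetic difference is that the paper justifies the local lower bound for solutions of \eqref{e7} by comparing them, as supersolutions, with the solutions of \eqref{e2} to which Lemma \ref{l2} applies, a one-line comparison step that you use implicitly when citing Lemma \ref{l2} directly.
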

\begin{proof}  From the above Lemma $\ref{l6}$, we can conclude the existence of atleast two solutions $u_n$ and $v_n$ of the problem $\eqref{e7}.$ Also for a suitable choice of $c>0, ~~\underbar u=(c\phi_1+n^{\frac{1+p-\gamma}{p}})^{\frac{p}{\gamma +p-1}}-\frac{1}{n}$ will be a weak subsolution to the problem $\eqref{e2}$ for $\lambda=\lambda_0.$\\
Again, using $\frac{\lambda_0}{(r+\frac{1}{n})^\gamma}\leq \frac{\lambda_0}{(r+\frac{1}{n})^\gamma}+g(r)+\mu_n$ for all $r\geq 0$ we can conclude that each solutions of the problem $\eqref{e7}$ with $\lambda=\lambda_0$ is a weak supersolution of $\eqref{e2}.$ Now by the strong comparison principle \cite{Guedda}, we have 
\begin{align}\label{e10}
\bar u\leq u_{n,\lambda_0}\leq u_n\leq\beta_2,~ \bar u\leq u_{n,\lambda_0}\leq v_n ~\text{and}~ ||v_n||_\infty\geq\beta_2+\epsilon.
\end{align}
Let us take $z_n=u_n$ or $v_n$, then from $\eqref{e10}$ and the Lemma $\ref{l5}$ we have, $$\bar u\leq z_n\leq M,$$ where $M$ is independent of $n.$ By using the strong comparison principle \cite{Guedda} and Lemma $\ref{l2}$, we have
\begin{align}\label{e11}
\forall\,K\subset\subset\Omega,~\exists~C_K ~\text{such that}~ z_n\geq C_K>0 ~\text{in}~ K,~\forall\, n\in\mathbb{N}	.
\end{align}
\textbf{Claim.} $(z_n)$ is bounded in $W_{loc}^{1,p}(\Omega).$
{\it Proof.} Consider $z_n\phi^p$ as a test function in the equation $\eqref{e7}$ for $\phi\in C_0^1(\Omega)$, then we get
	\begin{align*}
	\int_\Omega|\nabla z_n|^p\phi^p=-p\int_\omega\phi^{p-1}z_n|\nabla z_n|^{p-2}\nabla\phi\cdot\nabla z_n+\int_\Omega\frac{\lambda_0 z_n\phi^p}{(z_n+\frac{1}{n})^\gamma}+\int_\Omega z_ng(z_n)\phi^p+\int_\Omega z_n\mu_n
	\end{align*}
	By using the modified Young's inequality we have, $\int_\Omega|\nabla z_n|^p\phi^p\leq C_\phi~\forall~n\in\mathbb{N}$, where $C_\phi$ is a constant depending only on $\phi.$ Hence, $z_n\in W_{loc}^{1,p}(\Omega)$ and there exists $z\in W_{loc}^{1,p}(\Omega)\cap L^\infty(\Omega)$ such that $z_n\rightarrow z$ a.e upto a subsequence and $z_n\rightarrow z$ weakly in $W^{1,p}(K)$ for all $K\subset\subset\Omega.$ From the Theorem 4.4 of \cite{Canino}, $\int_\Omega|\nabla u_n|^{p-2}\nabla u_n\cdot\nabla\phi$ converges to $\int_\Omega|\nabla u|^{p-2}\nabla u\cdot\nabla\phi.$\\ Again, by using dominated convergence theorem, we have $$\lim_{n\rightarrow\infty}\int_\Omega\left(\frac{\lambda_0\phi}{(z_n+\frac{1} {n})^\gamma}+\phi~ g(z_n)\right)dx=\lambda_0\int_\Omega\frac{\phi}{z^\gamma}dx+ \int_\Omega\phi~ g(z)dx$$
	Since, $||u_n||_\infty\leq\beta_2,~ ||v_n||_\infty\geq\beta_2+\epsilon>\beta_2
	$ and $u_n\rightarrow u, v_n\rightarrow v$, we have the existence of two distinct solutions $u$ and $v.$
\end{proof}
\noindent We will now prove the existence result of the problem $\eqref{e0}$.
\section{Existence result}\label{main}
\subsection{The case of $\gamma < 1.$}\label{sub1}
Let us consider the problem in $\eqref{e7}$ for the case of $\gamma<1$.
\begin{lemma}\label{ml1}
	Let $u_n$ be a solution of $\eqref{e7}$ with $\gamma<1$. Then $(u_n)$ is bounded in $W_0^{1,r}(\Omega)$ for every $r<\frac{N(p-1)}{N-1}$.
\end{lemma}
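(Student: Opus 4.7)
The plan is to apply the classical Boccardo--Gallou\"et truncation technique for measure-data problems, adapting it to accommodate the singular term $\lambda/(u_n+1/n)^\gamma$ and the subcritical source $g(u_n)$. For each fixed $n$, since $\mu_n$ is smooth, standard regularity gives $u_n \in W_0^{1,p}(\Omega)$; together with the uniform $L^\infty$ bound $\|u_n\|_\infty \leq M$ from Lemma \ref{l5}, the truncation $T_k(u_n)\in W_0^{1,p}(\Omega)$ is an admissible test function in \eqref{weak} for every $k > 0$.

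Plugging $\phi = T_k(u_n)$ into the weak formulation, the goal is the fundamental bound
$$\int_\Omega |\nabla T_k(u_n)|^p \, dx \leq C k, \qquad k \geq 1,$$
with $C$ independent of $n$. The measure term contributes at most $k\|\mu_n\|_{L^1(\Omega)} \leq Ck$. The source term is handled via $|g(u_n)| \leq CM^q$, giving $\int_\Omega g(u_n) T_k(u_n)\,dx \leq CM^q k\,|\Omega|$. For the singular term the hypothesis $\gamma < 1$ is essential: the pointwise inequalities $u_n/(u_n+1/n)^\gamma \leq u_n^{1-\gamma}$ on $\{u_n \leq k\}$ and $T_k(u_n)/(u_n+1/n)^\gamma \leq k^{1-\gamma}$ on $\{u_n > k\}$ produce a bound of order $C(M^{1-\gamma}+k^{1-\gamma})|\Omega|$, absorbed into $Ck$ since $1-\gamma < 1$.

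From this energy estimate I would carry out the standard distribution function analysis. For $h,k > 0$, splitting
$$|\{|\nabla u_n|>h\}| \leq |\{|\nabla u_n|>h,\ u_n\leq k\}| + |\{u_n > k\}|,$$
the first term is bounded by $Ck/h^p$ via the energy estimate, while the Sobolev inequality applied to $T_k(u_n)$ gives $k^{p^*}|\{u_n > k\}| \leq \|T_k(u_n)\|_{p^*}^{p^*} \leq C k^{p^*/p}$, hence $|\{u_n > k\}| \leq C k^{-N(p-1)/(N-p)}$. Optimising in $k$ (the choice $k\sim h^{(N-p)/(N-1)}$) produces $|\{|\nabla u_n|>h\}| \leq C h^{-N(p-1)/(N-1)}$ uniformly in $n$, so $|\nabla u_n|$ is uniformly bounded in $\mathcal{M}^{N(p-1)/(N-1)}(\Omega)$. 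The embedding \eqref{marcin} together with the Poincar\'e inequality then yields the claimed uniform bound in $W_0^{1,r}(\Omega)$ for every $r < N(p-1)/(N-1)$.

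The main technical hurdle is securing the linear-in-$k$ estimate for the singular contribution uniformly in $n$: the role of $\gamma < 1$ is precisely to reduce $T_k(u_n)/(u_n+1/n)^\gamma$ to a uniformly bounded integrand via $u_n^{1-\gamma}$, which absorbs cleanly into $Ck$. For $\gamma \geq 1$ this simple bound degenerates and a different device (presumably truncations of the form $(u_n+1/n)^\theta - (1/n)^\theta$ with $\theta$ chosen according to $\gamma$) will be required, which is presumably why the paper treats that regime in a separate subsection.
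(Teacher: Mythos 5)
Your proposal is correct and follows essentially the same route as the paper: testing with $T_k(u_n)$, using $\gamma<1$ together with the uniform $L^\infty$ bound to get $\int_\Omega|\nabla T_k(u_n)|^p\leq Ck$, then the standard Boccardo--Gallou\"et level-set splitting, the Sobolev inequality on $T_k(u_n)$, and the optimization $k\sim t^{(N-p)/(N-1)}$ to obtain the Marcinkiewicz bound on $|\nabla u_n|$ and conclude via the embedding \eqref{marcin}. The only cosmetic difference is your case split $\{u_n\leq k\}$ versus $\{u_n>k\}$ for the singular term, where the paper simply bounds $T_k(u_n)/(u_n+\tfrac1n)^\gamma\leq u_n^{1-\gamma}$ globally.
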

	\begin{proof}
		We will prove the boundedness of ($\nabla u_n$) in the Marcinkiewicz space $\mathcal{M}^\frac{N(p-1)}{N-1}(\Omega)$. For this, let us take $\varphi =T_k(u_n)$ as a test function in the weak formulation $\eqref{weak}$ and we have
		\begin{equation}\label{me1}
		\int_\Omega |\nabla T_k(u_n)|^p   = \int_\Omega \frac{\lambda}{(u_n+ \frac{1}{n})^\gamma}T_k(u_n) +\int_\Omega g(u_n)T_k(u_n) + \int_\Omega T_k(u_n) \mu_n.
		\end{equation}
		Observe \hspace{0.3cm} $$\frac{T_k(u_n)}{(u_n+\frac{1}{n})^{\gamma}}\leq\frac{u_n}{(u_n+\frac{1}{n})^{\gamma}}=\frac{u_n^{\gamma}}{(u_n+\frac{1}{n})^{\gamma}u_n^{\gamma-1}}\leq u_n^{1-\gamma}$$ and
	    $$\int_\Omega T_k(u_n) \mu_n \leq k||\mu_n||_{L^1(\Omega)} \leq Ck. $$
		Therefore, we have, 
		\begin{equation}\label{me2}
		\int_\Omega |\nabla T_k(u_n)|^p  \leq Ck.
		\end{equation}
	Now consider the following set inclusion
		\begin{align*}
		\{|\nabla u_n|\geq  t\} & = \{|\nabla u_n|\geq  t,u_n< k\} \cup \{|\nabla u_n| \geq t,u_n \geq  k\}
		\\& \subset \{|\nabla u_n|\geq  t,u_n <k\} \cup \{u_n \geq k\}\subset \Omega.
		\end{align*}
		With the help of the subadditivity property of Lesbegue measure $m$ we have,
		\begin{equation}\label{me3}
		m(	\{|\nabla u_n|\geq t\}) \leq m(\{|\nabla u_n|\geq t,u_n< k\}) + m(\{u_n \geq  k\}).
		\end{equation}
		By the Sobolev inequality, we have
		\begin{equation}\label{sobo}
		\frac{1}{\lambda_1}\left(\int_\Omega |T_k(u_n)|^{p^*}\right)^{\frac{p}{p^*}}\leq \int_{\Omega}|\nabla T_k(u_n)|^p \leq Ck
		\end{equation}
		  where $\lambda_1$ is the first eigenvalue of the $p$-Laplacian operator. Now, on restricting the left hand side of the integral \eqref{sobo} on $I=\left\lbrace x\in\Omega:u_n\geq k \right\rbrace$, such that $T_k(u_n)=k$, we  obtain 
		  \begin{align*}
		 & k^pm(\{u_n\geq k\})^{\frac{p}{p^*}}\leq Ck\\\Rightarrow~&m(\{u_n\geq k\})\leq \frac{C}{k^\frac{N(p-1)}{N-p}},~ \forall\,k\geq1.
		  \end{align*}
		Hence, $(u_n)$ is bounded in $\mathcal{M}^{\frac{N(p-1)}{N-p}}(\Omega)$.\\
		Similarly on restricting $\eqref{sobo}$ on $I^{'}=\{|\nabla u_n|\geq  t,u_n< k\}$, we have
		\begin{center}
			$m(\{|\nabla u_n|\geq  t,u_n< k\})\leq \frac{1}{t^p}\int_\Omega |\nabla T_k(u_n)|^p\leq \frac{Ck}{t^p}, \forall k>1. $
		\end{center}
		\noindent Now $\eqref{me3}$ becomes
		$$	m(	\{|\nabla u_n|\geq t\}) \leq m(\{|\nabla u_n|\geq t,u_n< k\}) + m(\{u_n \geq k\})\leq \frac{Ck}{t^p} + \frac{C}{k^\frac{N(p-1)}{N-p}}, \forall k>1.$$
		Let us choose, $k=t^{\frac{N-p}{N-1}}$ and hence we get $$ m(\{|\nabla u_n|\geq t\})\leq \frac{C}{t^\frac{N(p-1)}{N-1}}, \hspace{0.2cm} \forall\,t\geq 1.$$
		We have proved that $(\nabla u_n)$ is bounded in $\mathcal{M}^{\frac{N(p-1)}{N-1}}(\Omega)$. This implies by property $\eqref{marcin}$ that $(u_n)$ is bounded in $W_0^{1,r}(\Omega)$, for every $r<\frac{N(p-1)}{N-1}$.$\vspace{0.1cm}$
		\end{proof}
\begin{theorem}\label{mt1}
	Let $\gamma < 1$. Then there exists a weak solution $u$ of $\eqref{e0}$ in $W_0^{1,r}(\Omega)$ for every $r<\frac{N(p-1)}{N-1}$.
\end{theorem}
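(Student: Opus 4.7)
The plan is to realize the weak solution $u$ of \eqref{e0} as the almost-everywhere limit of the sequence $(u_n)$ of solutions to the regularized problems $(P_n)$ constructed in Lemma~\ref{l1}. The uniform estimate of Lemma~\ref{ml1} provides a bound on $(u_n)$ in $W_0^{1,r}(\Omega)$ for every $r<N(p-1)/(N-1)$; reflexivity, the compact embedding $W_0^{1,r}(\Omega)\hookrightarrow L^r(\Omega)$, and a diagonal procedure in $r$ yield a subsequence (still denoted $u_n$) and a limit function $u$ belonging to $W_0^{1,r}(\Omega)$ for every admissible $r$, with $u_n\rightharpoonup u$ weakly in each such Sobolev space, $u_n\to u$ strongly in $L^r(\Omega)$, and $u_n\to u$ almost everywhere in $\Omega$. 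Lemma~\ref{l2} tells us the convergence is monotone non-decreasing and that $u_n\geq C_K>0$ on every $K\subset\subset\Omega$, a bound inherited by $u$; in particular $u>0$ in $\Omega$.

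The crucial analytic step is the almost everywhere convergence $\nabla u_n\to\nabla u$. I would proceed by testing the difference of the $p$-Laplacian equations for $u_n$ and $u_m$ against a truncation $T_k(u_n-u_m)\eta$ with $\eta\in C_c^\infty(\Omega)$, using the standard monotonicity inequalities for $|\xi|^{p-2}\xi$ and the uniform local positivity of $u_n$ to control the singular contribution on $\operatorname{supp}\eta$. Alternatively, one invokes directly the a.e.\ gradient convergence contained in \cite[Theorem 4.4]{Canino}, as already used in the proof following Lemma~\ref{l6}. This delivers $\nabla u_n\to\nabla u$ a.e.\ in $\Omega$, hence $|\nabla u_n|^{p-2}\nabla u_n\to|\nabla u|^{p-2}\nabla u$ a.e.

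To finish, I pass to the limit in the weak formulation \eqref{weak} against an arbitrary test function $\phi\in C_c^1(\Omega)$ with support $K\subset\subset\Omega$. The singular term converges by dominated convergence, because the lower bound $u_n\geq C_K>0$ forces $\phi/(u_n+1/n)^\gamma\leq\|\phi\|_\infty C_K^{-\gamma}$; the same bound on $u$ yields $\phi/u^\gamma\in L^1(\Omega)$, as required by the definition of weak solution. The reaction term is handled by Vitali's theorem: the growth bound $|g(u_n)|\leq Cu_n^q$ together with the Sobolev embedding $W_0^{1,r}(\Omega)\hookrightarrow L^{r^\ast}(\Omega)$, applied for $r$ close to $N(p-1)/(N-1)$ so that $r^\ast$ is close to $N(p-1)/(N-p)>q$, yields a uniform $L^s$ bound on $g(u_n)$ for some $s>1$ and hence equi-integrability on $K$. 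For the principal part, the Marcinkiewicz estimate from the proof of Lemma~\ref{ml1} shows that $(|\nabla u_n|^{p-1})$ is equi-integrable on $K$, so a.e.\ convergence plus Vitali gives $\int_\Omega|\nabla u_n|^{p-2}\nabla u_n\cdot\nabla\phi\to\int_\Omega|\nabla u|^{p-2}\nabla u\cdot\nabla\phi$. Finally, $\int_\Omega\phi\mu_n\,dx\to\int_\Omega\phi\,d\mu$ by Definition~\ref{measure}.

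The main obstacle will be the almost everywhere convergence of $\nabla u_n$: the datum on the right-hand side of $(P_n)$ is only $L^1(\Omega)$ (due to $\mu_n$) and the singular term $(u_n+1/n)^{-\gamma}$ offers no useful global integrability, so the classical strong compactness results for $p$-Laplacian equations with $L^{p'}$ data do not apply directly. The saving feature is locality: on any compact $K\subset\subset\Omega$ one has simultaneously a positive lower bound on $u_n$ (Lemma~\ref{l2}) and the Marcinkiewicz-type estimate of Lemma~\ref{ml1}, and these two ingredients together are exactly what the Boccardo--Murat style truncation argument (equivalently \cite[Theorem 4.4]{Canino}) needs in order to produce a.e.\ convergence of $\nabla u_n$ on $K$, whence on all of $\Omega$ by exhaustion.
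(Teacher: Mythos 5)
Your proposal is correct and follows essentially the same route as the paper: extract a weak $W_0^{1,r}$-limit of the approximating solutions $u_n$ via Lemma \ref{ml1}, use the local lower bound of Lemma \ref{l2} to pass to the limit in the singular term by dominated convergence, obtain a.e.\ convergence of $\nabla u_n$ by a Boccardo--Murat type truncation argument (the paper cites Step 4 of Theorem 3.2 in \cite{Oliva}, you cite \cite{Canino}, which is the same mechanism), and conclude with the weak convergence $\mu_n\rightharpoonup\mu$. Your treatment of the terms $g(u_n)$ and $|\nabla u_n|^{p-2}\nabla u_n$ via Vitali and equi-integrability is just a more explicit version of what the paper does, not a different method.
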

	\begin{proof}	Lemma $\ref{ml1}$, implies that there exists $u$ such that a subsequence of $u_n$ converges weakly to $u$ in $W_0^{1,r}(\Omega)$, for every $r<\frac{N(p-1)}{N-1}$. This implies that for $\varphi$ in $C_c^1(\Omega)$
		$$\lim_{n\rightarrow +\infty} \int_{\Omega} \nabla u_n . \nabla\varphi = \int_{\Omega}\nabla u .\nabla\varphi.$$
		Also due to the compact embeddings we can assume that $u_n$ converges to $u$ both strongly in $L^1(\Omega)$ and a.e. in $\Omega$. Thus, taking $\varphi$ in $C_c^1(\Omega)$, we get,
		\begin{align*}
		0 & \leq \big|\frac{\lambda}{(u_n+\frac{1}{n})^\gamma}\varphi\big|
		\\& \leq C\lambda ||\varphi||_{L^\infty(\Omega)}
		\end{align*}
	    This is sufficient to apply the dominated convergence theorem to obtain
		$$\lim_{n\rightarrow +\infty} \int_{\Omega}  \frac{\lambda}{(u_n+\frac{1}{n})^\gamma}\varphi = \int_{\Omega}\frac{\lambda}{u^\gamma}\varphi.$$
		 Further, since $(u_n)$ is bounded in $W_0^{1,r}(\Omega)$, we have by the compact embedding that $u_n\rightarrow u$ in  $L^r(\Omega)$. By the same standard argument, there exists a subsequence that converge to $u$ uniformly except on a set of arbitrarily small Lebesgue measure. Since, by the hypothesis $g$ is continuous, the limit $n\rightarrow \infty$ can be passed on. On applying a similar argument as in step 4 of the Theorem 3.2 in \cite{Oliva}, we have a.e. convergence of the $\nabla u_n$ towards $\nabla u$ that follows in a standard way by proving that $\nabla T_k (u_n)$ goes to $\nabla T_k (u)$, in $L_{loc}^{r}(\Omega)$ for $r<p$, for every $k>0$. Finally, we can pass the limit $n\rightarrow\infty$ in the last term of $\eqref{weak}$ involving $\mu_n$.
		This concludes the proof of the result as it is easy to pass to the limit in $\eqref{weak}$. Therefore, we obtain a weak solution of $\eqref{e0}$ in $W_0^{1,r}(\Omega)$ for every $r<\frac{N(p-1)}{N-1}$.
	\end{proof}
\subsection{The case of $\gamma\geq1.$}\label{sub2}
Due to the strong singularity we can hold some local estimates on $u_n$ in the Sobolev space. We shall give global estimates on $T_k^{\frac{\gamma+p-1}{2}}(u_n)$ in $W_0^{1,2}(\Omega)$ with the aim of giving sense, at least in a weak sense, to the boundary values of $u$.
\begin{lemma}\label{ml2}
	Let $u_n$ be a solution of $\eqref{e7}$ with $\gamma\geq1$. Then $T_k^{\frac{\gamma+p-1}{p}}(u_n)$ is bounded in $W_0^{1,p}(\Omega)$ for every fixed $k>0$.
\end{lemma}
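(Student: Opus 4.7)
The plan is to test the weak formulation of $(P_n)$ against $\phi_n=(T_k(u_n))^\gamma$. Since $\gamma\geq 1$, the map $t\mapsto (T_k(t))^\gamma$ is Lipschitz on $[0,\infty)$ with constant $\gamma k^{\gamma-1}$, so $\phi_n\in W_0^{1,p}(\Omega)\cap L^\infty(\Omega)$ is an admissible test function with
\[
\nabla\phi_n=\gamma(T_k(u_n))^{\gamma-1}\nabla T_k(u_n)=\gamma\,u_n^{\gamma-1}\chi_{\{u_n<k\}}\nabla u_n.
\]
First I would compute that the principal part evaluates to
\[
\int_\Omega|\nabla u_n|^{p-2}\nabla u_n\cdot\nabla\phi_n\,dx=\gamma\int_{\{u_n<k\}}u_n^{\gamma-1}|\nabla u_n|^p\,dx,
\]
which, by applying the chain rule to $(T_k(\cdot))^{(\gamma+p-1)/p}$, is precisely $\gamma\bigl(\tfrac{p}{\gamma+p-1}\bigr)^p\int_\Omega|\nabla T_k^{(\gamma+p-1)/p}(u_n)|^p\,dx$.

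Next I would estimate the three contributions on the right-hand side of \eqref{weak} corresponding to this test function. For the singular term, the key observation is that $T_k(u_n)\leq u_n\leq u_n+\tfrac1n$, so $(T_k(u_n))^\gamma/(u_n+\tfrac1n)^\gamma\leq 1$ pointwise, yielding
\[
\lambda\int_\Omega\frac{(T_k(u_n))^\gamma}{(u_n+\tfrac1n)^\gamma}\,dx\leq \lambda|\Omega|.
\]
For the reaction term, I invoke the uniform bound $\|u_n\|_\infty\leq M$ supplied by Lemma~\ref{l5} together with the growth hypothesis $g(t)\leq Ct^q$ and $(T_k(u_n))^\gamma\leq k^\gamma$ to obtain $\int_\Omega g(u_n)(T_k(u_n))^\gamma\,dx\leq CM^q k^\gamma|\Omega|$. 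For the measure term, $(T_k(u_n))^\gamma\leq k^\gamma$ together with the uniform $L^1$-boundedness of $(\mu_n)$ provide $\int_\Omega (T_k(u_n))^\gamma\mu_n\,dx\leq k^\gamma\|\mu_n\|_{L^1(\Omega)}\leq Ck^\gamma$.

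Assembling these inequalities produces, for every fixed $k>0$,
\[
\int_\Omega\bigl|\nabla T_k^{(\gamma+p-1)/p}(u_n)\bigr|^p\,dx\leq C(k,\gamma,p,\lambda,M),
\]
with the right-hand side independent of $n$. Since $T_k^{(\gamma+p-1)/p}(u_n)\leq k^{(\gamma+p-1)/p}$ is also bounded in $L^p(\Omega)$, this delivers the desired uniform bound in $W_0^{1,p}(\Omega)$. The only subtle point is the admissibility of $\phi_n$ as a test function up to the boundary, and this is exactly the place where the hypothesis $\gamma\geq 1$ is used: it is the Lipschitz regularity of $t\mapsto (T_k(t))^\gamma$ that ensures $\phi_n\in W_0^{1,p}(\Omega)$ via composition with $u_n$, and without it the computation would break down precisely as in the $\gamma<1$ regime handled separately in Section~\ref{sub1}. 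Everything else is bookkeeping.
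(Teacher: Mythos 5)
Your argument is essentially the paper's own proof: you test \eqref{weak} with $T_k^{\gamma}(u_n)$, identify the principal part with $\int_\Omega|\nabla T_k^{(\gamma+p-1)/p}(u_n)|^p$ via the chain rule, and bound the singular, reaction and measure terms exactly as in \eqref{me6} using $T_k^\gamma(u_n)/(u_n+\tfrac1n)^\gamma\leq 1$, the growth of $g$ with the uniform $L^\infty$ bound, and the $L^1$ bound on $(\mu_n)$. Your bookkeeping of the constant $\bigl(\tfrac{p}{\gamma+p-1}\bigr)^p$ and the remark on admissibility of the Lipschitz composition for $\gamma\geq1$ are slightly more careful than the paper's, but the route is the same.
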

	\begin{proof}
		Consider $\varphi=T_k^\gamma(u_n)$ as a test function in $\eqref{weak}$. We have
		\begin{align}\label{me4}
		\gamma\int_\Omega |\nabla u_n|^{p-2}\nabla u_n&\cdot\nabla T_k(u_n)T_k^{\gamma-1}(u_n)\nonumber\\ &=\int_{\Omega} \frac{\lambda}{(u_n+\frac{1}{n})^\gamma}T_k^\gamma(u_n) +\int_{\Omega} g(u_n)T_k^\gamma(u_n) + \int_{\Omega}T_k^\gamma(u_n)\mu_n.
		\end{align}
		We can estimate the term on the left hand side of $\eqref{me4}$ as,
		\begin{equation}\label{me5}
			\gamma\int_\Omega |\nabla u_n|^{p-2}\nabla u_n\cdot\nabla T_k(u_n)T_k^{\gamma-1}(u_n)=\gamma\int_\Omega |\nabla T_k^{\frac{\gamma+p-1}{p}}(u_n)|^p.
		\end{equation}
		As $\frac{T_k^\gamma(u_n)}{(u_n+\frac{1}{n})^\gamma}\leq \frac{u_n^\gamma}{(u_n+\frac{1}{n})^\gamma}\leq 1$, the term on the right hand side of $\eqref{me4}$ can be estimated as,
		\begin{align}\label{me6}
		\int_{\Omega} \frac{\lambda}{(u_n+\frac{1}{n})^\gamma}T_k^\gamma(u_n) +\int_{\Omega} g(u_n)T_k^\gamma(u_n) &+ \int_{\Omega}T_k^\gamma(u_n)\mu_n\nonumber\\ 
		& \leq  C\lambda k^\gamma +C\int_{\Omega} u_n^qT_k^\gamma (u_n)+ k^\gamma \int_{\Omega} \mu_n\nonumber 
		\\&  \leq  C\lambda k^\gamma +CM k^\gamma+ k^\gamma \int_\Omega \mu_n\nonumber
		\\& \leq C(k,\gamma)k^\gamma. 
		\end{align}
		On combining the previous inequalities $\eqref{me5}$ and $\eqref{me6}$ we get
		\begin{equation}\label{me7}
		\int_{\Omega} |\nabla T_k^{\frac{\gamma+p-1}{p}}(u_n)|^p \leq Ck^\gamma
		\end{equation}
		then,  $\left(T_k^{\frac{\gamma+p-1}{p}}(u_n)\right)$ is bounded in $W_0^{1,p}(\Omega)$ for every fixed $k>0$.
	\end{proof} 
\noindent Now, so as to pass to the limit $n\rightarrow\infty$ in the weak formulation  $\eqref{weak}$, we require to prove some local estimates on $u_n$. We first prove the following.
\begin{lemma}\label{ml3}
	Let $u_n$ be a solution of $\eqref{e7}$ with $\gamma\geq1$. Then ($u_n$) is bounded in $W_{loc}^{1,r}(\Omega)$ for every $r<\frac{N(p-1)}{N-1}$.
\end{lemma}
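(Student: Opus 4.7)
The plan is to derive the stronger uniform bound $(u_n) \subset W^{1,p}_{loc}(\Omega)$ directly from Lemma \ref{ml2} combined with Lemma \ref{l2}; the $W^{1,r}_{loc}(\Omega)$ conclusion of the lemma is then immediate from the embedding $W^{1,p}(K) \hookrightarrow W^{1,r}(K)$ on every compact $K \subset\subset \Omega$, since $p < N$ forces $N(p-1)/(N-1) < p$.

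Setting $\beta = (\gamma+p-1)/p$, the hypothesis $\gamma \geq 1$ is precisely the condition $\beta \geq 1$, which is exactly what is needed to apply the chain rule to $T_k^\beta(u_n)$. On $\{u_n \leq k\}$ one computes
\[
|\nabla T_k^\beta(u_n)|^p = \beta^p T_k(u_n)^{p(\beta-1)} |\nabla T_k(u_n)|^p = \beta^p u_n^{\gamma - 1} |\nabla u_n|^p,
\]
while $\nabla T_k^\beta(u_n) = 0$ on $\{u_n > k\}$, so the estimate of Lemma \ref{ml2} becomes $\int_{\{u_n \leq k\}} u_n^{\gamma-1} |\nabla u_n|^p\,dx \leq C k^\gamma / \beta^p$. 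Taking $k = M$, with $M$ the uniform $L^\infty$ bound supplied by Lemma \ref{l5}, the sublevel set $\{u_n \leq M\}$ equals $\Omega$ and we arrive at the global bound
\[
\int_\Omega u_n^{\gamma - 1} |\nabla u_n|^p\,dx \leq C_1 M^\gamma,
\]
uniformly in $n$. Now, for any $K \subset\subset \Omega$, Lemma \ref{l2} provides $C_K > 0$ independent of $n$ with $u_n \geq C_K$ on $K$. Because $\gamma \geq 1$, this implies $u_n^{\gamma - 1} \geq C_K^{\gamma - 1}$ on $K$, yielding $\int_K |\nabla u_n|^p\,dx \leq C_1 M^\gamma / C_K^{\gamma - 1}$. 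Combined with the trivial estimate $\|u_n\|_{L^p(K)} \leq M |K|^{1/p}$, this produces a uniform $W^{1,p}(K)$ (hence $W^{1,r}(K)$) bound and proves the lemma.

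This approach presents no genuine obstacle; the argument is essentially a chain-rule repackaging of Lemma \ref{ml2} together with the local positivity from Lemma \ref{l2}. The hypothesis $\gamma \geq 1$ is used exactly twice --- once to justify the chain rule for $T_k^\beta$ (so no negative exponent on $u_n$ appears in the integrand) and once to orient the lower bound $u_n^{\gamma-1} \geq C_K^{\gamma-1}$ in the right direction. An alternative route closer in spirit to Lemma \ref{ml1} would be to test the weak formulation with $T_k(u_n)\phi^p$ for a cutoff $\phi$ and redo the Marcinkiewicz decomposition locally, but this is noticeably more delicate: the cutoff remainder $p\int \phi^{p-1}T_k(u_n)|\nabla u_n|^{p-2}\nabla u_n \cdot \nabla \phi$ is easy to absorb on $\{u_n \leq k\}$ via Young's inequality but resists direct treatment on $\{u_n > k\}$, and ultimately yields only the weaker stated conclusion.
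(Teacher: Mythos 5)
Your argument is correct, and it takes a genuinely different (and shorter) route than the paper. You exploit the uniform bound $\|u_n\|_\infty\le M$ of Lemma \ref{l5} to take $k=M$ in the estimate behind Lemma \ref{ml2}, so the truncation disappears and \eqref{me7} becomes the global weighted bound $\int_\Omega u_n^{\gamma-1}|\nabla u_n|^p\le C M^\gamma$; the weight is then removed on each $K\subset\subset\Omega$ by the lower bound $u_n\ge C_K>0$ of Lemma \ref{l2} (legitimately oriented since $\gamma\ge 1$), giving the stronger conclusion that $(u_n)$ is bounded in $W^{1,p}_{loc}(\Omega)$, from which the stated $W^{1,r}_{loc}(\Omega)$ bound follows because $\frac{N(p-1)}{N-1}<p$ when $p<N$. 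The paper instead splits $u_n=T_1(u_n)+G_1(u_n)$: the piece $T_1(u_n)$ is handled locally exactly as you do (test with $T_1^{\gamma}(u_n)$ and use $u_n\ge C_K$), while $G_1(u_n)$ is treated by a level-set/Marcinkiewicz decomposition --- testing with $T_k(G_1(u_n))$, combining with \eqref{me7}, and choosing $k=t^{\frac{N-p}{N-1}}$ --- which yields a \emph{global} $W_0^{1,r}(\Omega)$ bound for $G_1(u_n)$. That global piece carries information about the boundary behaviour of $u_n$ above the level $1$ (in the spirit of the remark preceding Lemma \ref{ml2}) which your purely local argument does not produce, but it is not needed for the lemma as stated; conversely, your route gives the sharper local exponent $p$. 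Note that both arguments ultimately rest on the same ingredients, since the paper's estimates \eqref{me6} and Step 1 also invoke the uniform $L^\infty$ bound; given that bound, your proof is a clean shortcut.
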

	\begin{proof}
		We prove the theorem in two steps.\\
		$\boldmath{\text{\bf Step 1.}}$ We claim that $\left(G_1(u_n)\right)$ is bounded in  $W_0^{1,r}(\Omega)$ for every $r<\frac{N(p-1)}{N-1}$.\\
		We can see that $G_1(u_n)=0$ when $0\leq u_n\leq 1$, $G_1(u_n)=u_n-1$, otherwise i.e when $u_n>1$. So $\nabla G_1(u_n)=\nabla u_n$ for $u_n>1$.\\
		Now, we need to show that $\left(\nabla G_1(u_n)\right)$ is bounded in $\mathcal{M}^{\frac{N(p-1)}{N-1}}(\Omega)$, where  $\mathcal{M}^{\frac{N(p-1)}{N-1}}(\Omega)$ is the Marcinkiewicz space. Then we have
		\begin{align*}
		\{|\nabla u_n|> t, u_n>1\} & = \{|\nabla u_n|> t,1<u_n\leq k+1\} \cup \{|\nabla u_n| > t,u_n > k+1\}
		\\& \subset \{|\nabla u_n|> t,1<u_n\leq k+1\} \cup \{u_n > k+1\}\subset \Omega.
		\end{align*}
		Hence, 
		\begin{equation}\label{me8}
		m(	\{|\nabla u_n|> t,u_n>1\}) \leq m(\{|\nabla u_n|> t,1<u_n\leq k+1\}) + m(\{u_n > k+1\}).
		\end{equation}
		In order to estimate $\eqref{me8}$ we take $\varphi=T_k(G_1(u_n))$, for $k>1$, as a test function in $\eqref{e7}$.\\
		We observe that $\nabla T_k(G_1(u_n))= \nabla u_n$ only when $1<u_n \leq k+1$, otherwise is zero, and  $T_k(G_1(u_n))=0$ on $\{ u_n\leq 1\} $, we have
		\begin{align*}
		\int_\Omega |\nabla T_k(G_1(u_n))|^p & = \int_\Omega \frac{\lambda}{(u+\frac{1}{n})^\gamma}T_k(G_1(u_n)) + \int_{\Omega} g(u_n)T_k(G_1(u_n))+ \int_{\Omega}T_k(G_1(u_n))\mu_n \\& \leq C\lambda k + Ck\int_\Omega u_n^q +k\int_\Omega\mu_n
		\\&  \leq Ck
		\end{align*} 
		and by restricting the above integral on $I_1={\left\lbrace 1<u_n\leq k+1 \right\rbrace}$, we get
		\begin{align}
		\int_{\left\lbrace 1<u_n\leq k+1 \right\rbrace} |\nabla T_k(G_1(u_n))|^p \nonumber& = \int_{\left\lbrace 1<u_n\leq k+1 \right\rbrace} |\nabla u_n|^p\nonumber  \\& \geq \int_{\left\lbrace |\nabla u_n|>t, 1<u_n\leq k+1 \right\rbrace} |\nabla u_n|^p\nonumber \\&\geq t^p m(\{|\nabla u_n|> t,1<u_n\leq k+1\})\nonumber
		\end{align}
		so that, $$m(\{|\nabla u_n|> t,1<u_n\leq k+1\})\leq \frac{Ck}{t^p} \hspace{0.4cm}\forall k \geq 1.$$
		According to $\eqref{me7}$ in the proof of Lemma $\ref{l2}$, one can see that 
		$$\int_{\Omega} |\nabla T_k^{\frac{\gamma+p-1}{p}}(u_n)|^p \leq Ck^\gamma \hspace{0.2cm} \text{for any}\hspace{0.2cm} k>1.$$
		Therefore, from the Sobolev inequality  $$\frac{1}{\lambda_1}\Bigg(\int_\Omega |T_k^{\frac{\gamma+p-1}{p}}(u_n)|^{p^*}\Bigg)^{\frac{p}{p^*}}\leq \int_{\Omega}|\nabla T_k^{\frac{\gamma+p-1}{p}}(u_n)|^p \leq Ck^{\gamma},$$ where, $\lambda_1$ is the first eigenvalue of the $p$-Laplacian operator.
		Now, if we restrict the integral on the left hand side on $I_2=\left\lbrace u_n> k+1 \right\rbrace_{x\in\Omega} $, on which $T_k(u_n)=k$, we then obtain $$k^{\gamma+p-1}m(\{u_n>k+1\})^{\frac{p}{p^*}}\leq Ck^{\gamma},$$
		so that $$m(\{u_n>k+1\})\leq \frac{C}{k^\frac{N(p-1)}{N-p}},~\forall\, k\geq1.$$
		So, $(u_n)$ is bounded in $\mathcal{M}^{\frac{N(p-1)}{N-p}}(\Omega)$, i.e. $(G_1(u_n))$ is also bounded in $\mathcal{M}^{\frac{N(p-1)}{N-p}}(\Omega)$.\\
		Now $\eqref{me8}$ becomes
		\begin{eqnarray}	m(	\{|\nabla u_n|> t,u_n>1\})& \leq & m(\{|\nabla u_n|> t,1<u_n\leq k+1\}) + m(\{u_n > k+1\})\nonumber\\&\leq &\frac{Ck}{t^p} + \frac{C}{k^\frac{N(p-1)}{N-p}}, \forall k>1.\nonumber
		\end{eqnarray}
		We then choose, $k=t^{\frac{N-p}{N-1}}$ and we get $$ m(\{|\nabla u_n|> t,u_n>1\})\leq \frac{C}{t^\frac{N(p-1)}{N-1}} \hspace{0.2cm} \forall t\geq 1.$$
		We just proved that $(\nabla u_n)=(\nabla G_1(u_n))$ is bounded in $\mathcal{M}^{\frac{N(p-1)}{N-1}}(\Omega)$. This implies by property $\eqref{marcin}$ that $(G_1(u_n))$ is bounded in $W_0^{1,r}$ for every $r<\frac{N(p-1)}{N-1}$.$\vspace{0.1cm}$\\			 
		$\boldmath{\text{\bf Step 2.}}$ We claim that $T_1(u_n)$ is bounded in $W_{loc}^{1,r}(\Omega)$.\\
		We have to examine the behavior of $u_n$ for small values of $u_n$ for each $n$. We want to show that for every $K\subset\subset \Omega$, 
		\begin{equation}\label{me9}
		\int_K |\nabla T_1(u_n)|^p \leq C.
		\end{equation}
		We have already proved that $u_n\geq C_K>0$ on $K$ in Lemma $\ref{l2}$. We will use $\varphi=T_1^\gamma(u_n)$ as a test function in $\eqref{weak}$ to get
		\begin{align}\label{me10}
		\gamma\int_\Omega |\nabla u_n|^{p-2}\nabla u_n&\cdot\nabla T_k(u_n)T_k^{\gamma-1}(u_n)\nonumber\\ &=\int_{\Omega} \frac{\lambda}{(u_n+\frac{1}{n})^\gamma}T_k^\gamma(u_n) +\int_{\Omega} g(u_n)T_k^\gamma(u_n) + \int_{\Omega}T_k^\gamma(u_n)\mu_n \nonumber\\ &\leq C.
		\end{align}
	Now observe that 
		\begin{align}\label{me11}
			\gamma\int_\Omega |\nabla u_n|^{p-2}\nabla u_n\cdot\nabla T_1(u_n) T_1^{\gamma-1}(u_n) &\geq\int_K |\nabla T_1(u_n)|^p T_1^{\gamma-1}(u_n)\nonumber\\ &\geq  C_K^{\gamma-1}\int_{K}|\nabla T_1(u_n)|^p.
			\end{align}
		On combining $\eqref{me10}$ and $\eqref{me11}$ we get $\eqref{me9}$.
		We completed the proof as $u_n=T_1(u_n)+G_1(u_n)$. Hence, ($u_n$) is bounded in $W_{loc}^{1,r}(\Omega)$ for every $r<\frac{N(p-1)}{N-1}$.  
	\end{proof}
\noindent Now, we can finally state and prove the existence result for $\gamma\geq 1$.
\begin{theorem}
	Let $\gamma\geq1$. Then there exists a weak solution $u$ of $\eqref{e0}$ in $W_{loc}^{1,r}(\Omega)$ for every $r<\frac{N(p-1)}{N-1}$.
\end{theorem}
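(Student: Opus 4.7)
The plan is to follow the same scheme as in Theorem \ref{mt1}, but using the local estimates tailored to the strongly singular case. First I would apply Lemma \ref{ml3} to extract, up to a subsequence, a limit $u\in W_{loc}^{1,r}(\Omega)$ such that $u_n\rightharpoonup u$ weakly in $W_{loc}^{1,r}(\Omega)$ for every $r<\frac{N(p-1)}{N-1}$, together with $u_n\to u$ a.e. in $\Omega$ (via compact embedding on each $K\subset\subset\Omega$). In parallel I would use Lemma \ref{ml2}, which gives a uniform bound on $T_k^{(\gamma+p-1)/p}(u_n)$ in $W_0^{1,p}(\Omega)$, to make sense of the zero boundary values of $u$ in the appropriate trace sense.

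Next, for each test function $\varphi\in C_c^1(\Omega')$ with $\Omega'\subset\subset\Omega$, I would pass to the limit term by term in the weak formulation \eqref{weak}. For the singular term, the key ingredient is the uniform lower bound $u_n\geq C_K>0$ on $K=\operatorname{supp}\varphi$ supplied by Lemma \ref{l2}. This yields the domination
\[
\left|\frac{\lambda\,\varphi}{(u_n+\tfrac{1}{n})^\gamma}\right|\leq \frac{\lambda\,\|\varphi\|_{L^\infty(\Omega)}}{C_K^{\gamma}},
\]
so that Lebesgue's dominated convergence gives $\int_\Omega \frac{\lambda\,\varphi}{(u_n+1/n)^\gamma}\,dx\to \int_\Omega \frac{\lambda\,\varphi}{u^\gamma}\,dx$, which also shows $\frac{\varphi}{u^\gamma}\in L^1(\Omega)$. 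For the $g(u_n)$ term, since $\|u_n\|_\infty\leq M$ (Lemma \ref{l5}) and $g$ is continuous, the sequence $g(u_n)\varphi$ is uniformly bounded and converges a.e., so dominated convergence again gives $\int_\Omega g(u_n)\varphi\to \int_\Omega g(u)\varphi$. The measure term passes by the definition of $\mu_n\rightharpoonup \mu$ in $\mathbb{M}(\Omega)$.

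The principal obstacle is the nonlinear principal part $\int_\Omega|\nabla u_n|^{p-2}\nabla u_n\cdot\nabla\varphi$. Weak convergence of $u_n$ in $W_{loc}^{1,r}$ is not sufficient by itself, because $r<p$ and the integrand is nonlinear in the gradient. To handle this I would establish the a.e. convergence $\nabla u_n\to\nabla u$ on $\Omega$ by the now-standard truncation-based argument: I would show that $\nabla T_k(u_n)\to \nabla T_k(u)$ in $L^r_{loc}(\Omega)$ for every $k>0$ and $r<p$, following the scheme used in Step 4 of \cite[Theorem 3.2]{Oliva} and the convergence result \cite[Theorem 4.4]{Canino}. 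Combined with the local $W^{1,r}$ bound from Lemma \ref{ml3} and a uniform integrability argument, this produces a.e.\ convergence of the gradients and allows the limit passage in the $p$-Laplacian term.

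Finally, assembling the four limits yields the identity
\[
\int_\Omega |\nabla u|^{p-2}\nabla u\cdot\nabla\varphi\,dx=\lambda\int_\Omega\frac{\varphi}{u^\gamma}\,dx+\int_\Omega g(u)\varphi\,dx+\int_\Omega\varphi\,d\mu
\]
for every $\varphi\in C_c^1(\Omega')$ and, by density, for every $\varphi\in W_0^{1,p}(\Omega')$ with $\Omega'\subset\subset\Omega$. Since $u>0$ in $\Omega$ (inherited from $u_n\geq C_K$) and $u\in W_{loc}^{1,r}(\Omega)\cap L^\infty(\Omega)$, this proves that $u$ is a weak solution of \eqref{e0} in the sense of the definition, completing the proof. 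The most delicate step is unquestionably the a.e. gradient convergence, since neither $W_0^{1,p}$ estimates nor dominated convergence are available globally for $\gamma\geq 1$.
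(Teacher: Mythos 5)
Your proposal is correct and follows essentially the same route as the paper, which simply invokes the argument of Theorem \ref{mt1} together with the local estimates of Lemmas \ref{ml2} and \ref{ml3}; you have merely written out in detail the limit passages (local lower bound for the singular term, uniform $L^\infty$ bound for $g$, weak convergence of $\mu_n$, and the truncation-based a.e.\ gradient convergence via \cite{Oliva} and \cite{Canino}) that the paper leaves implicit.
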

	\begin{proof}
		The proof of this theorem is a straightforward application of the Theorem $\ref{mt1}$ and using the results in Lemma $\ref{ml2}$ and Lemma $\ref{ml3}$.
	\end{proof}
	\begin{center}
		${\textbf{Some Important results}}$
	\end{center}
	Define, $X=\{u\in C_0^{1,\alpha}(\bar{\Omega}):u(x)\geq0~\text{ in}~ \bar{\Omega}\}$ and let $\xi$ is a unit outward normal at $\partial\Omega$, then define $X_0=\{u\in C_0^{1,\alpha}(\bar{\Omega}):u(x)>0 ~\text{and}~ \frac{\partial u}{\partial\xi}(x)<0, ~\forall x\in \partial\Omega\}$. Clearly $X_0$ is the interior of $X$.
	\begin{lemma}\label{l7}
		If $u_1,u\in C_0^{1,\alpha}(\bar{\Omega})$ with $u_1\neq u$ and 
		$$-\Delta_p u_1> \frac{\lambda}{(u_1+\frac{1}{n})^\gamma}  +g(u_1)+\mu_n,$$ $$-\Delta_p u= \cfrac{\lambda}{(u+\frac{1}{n})^\gamma}  +g(u)+\mu_n,$$
		then $(u_1-u)\notin \partial X$.
		\begin{proof}
			We prove this Lemma by contradiction. Suppose $(u_1-u)\in \partial X$. Then $u_1(x)\geq u(x)$. By Strong maximum principle \cite{Guedda}, we can obtain $(u_1-u)\in X_0$. But $X_0\cap\partial X=\phi$, for which we get a contradiction. Therefore, $u_1-u$ does not belong to $\partial X.$
		\end{proof}
	\end{lemma}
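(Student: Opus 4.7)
The plan is to proceed by contradiction. Suppose $w := u_1 - u \in \partial X$. Using $\partial X \subset X$, we would then have $w \geq 0$ on $\bar\Omega$, i.e.\ $u_1 \geq u$ pointwise; combined with the hypothesis $u_1 \neq u$, this makes $w$ a nontrivial nonnegative element of $C_0^{1,\alpha}(\bar\Omega)$. Because $X_0$ is the interior of $X$, one has $X_0 \cap \partial X = \emptyset$, so it will suffice to produce, via the strong maximum principle, the conclusion $w \in X_0$, which immediately yields the desired contradiction.

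To reach that conclusion I would first subtract the two (in)equations to obtain, weakly,
\begin{equation*}
-\Delta_p u_1 + \Delta_p u > \lambda\bigl[(u_1+\tfrac{1}{n})^{-\gamma} - (u+\tfrac{1}{n})^{-\gamma}\bigr] + [g(u_1) - g(u)].
\end{equation*}
The bracket with the singular terms is nonpositive, because $u_1 \geq u$ and $r \mapsto (r+\tfrac{1}{n})^{-\gamma}$ is decreasing. For the remaining difference $g(u_1) - g(u)$, Lemma \ref{l2} provides $u, u_1 \geq C_K > 0$ on every $K \subset\subset \Omega$, while $u, u_1 \in C^{1,\alpha}(\bar\Omega)$ are globally bounded, so $g$ is Lipschitz on their joint range and one may write $g(u_1) - g(u) = c(x)\,w$ for some function $c$ bounded on compact subsets of $\Omega$. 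Consequently $w$ is a nontrivial nonnegative weak supersolution of a quasilinear inequality of the form $-\Delta_p u_1 + \Delta_p u + c(x)\,w \geq 0$, with the strict inequality inherited from the hypothesis on $u_1$.

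Invoking the strong maximum principle together with the Hopf-type boundary lemma for the $p$-Laplacian, as in \cite{Vazquez} and \cite{Guedda}, will then give $w > 0$ in $\Omega$ and $\frac{\partial w}{\partial \xi} < 0$ on $\partial \Omega$; hence $w \in X_0$, contradicting $w \in \partial X$. The hard part of this plan is arranging the differential inequality for $w$ in a form to which the strong comparison theory applies: the singular term is monotone in the wrong direction and $g$ is not assumed monotone, so both contributions must be absorbed into a bounded lower-order term $c(x)\,w$ using the uniform positivity of $u$ and $u_1$ on compact subsets and their $C^{1,\alpha}$-regularity. Once this absorption is in place, the classical quasilinear strong maximum / Hopf argument closes the proof.
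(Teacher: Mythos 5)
Your proposal follows essentially the same route as the paper's own proof: assume $u_1-u\in\partial X$, use $\partial X\subset X$ to get $u_1\geq u$, invoke the strong maximum principle (the paper cites \cite{Guedda}) to conclude $u_1-u\in X_0$, and contradict $X_0\cap\partial X=\emptyset$. The paper does this in three lines with no further justification, so your additional work recasting the singular term and $g(u_1)-g(u)$ as a bounded zero-order coefficient before applying the strong maximum/Hopf argument is a refinement of, not a departure from, the published argument.
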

	\begin{lemma}\label{l8}
		Assume $I$ is an interval in $\mathbb{R}$ and $A=I\times C_0^{1,\alpha}(\bar{\Omega})$ is a connected set of solutions of $\eqref{e7}$. Define $F:I\rightarrow C_0^{1,\alpha}(\bar{\Omega})$ is continuous such that $F(\lambda)$ is a supersolution to the problem $\eqref{e7}$.\\
		If $u_1\leq F(\lambda_1)$ in $\Omega$, $u_1\neq F(\lambda_1)$ for some $(\lambda_1,u_1)\in A$, then $u< F(\lambda)$ in $\Omega$, $\forall(\lambda,u)\in A$.	
		\begin{proof}
			Let $Z:A\rightarrow C_0^{1,\alpha}(\bar{\Omega})$ is a continuous map such that $Z(\lambda,u)=F(\lambda)-u$. $A$ is connected, so by continuity $Z(A)$ is connected in $C_0^{1,\alpha}(\bar{\Omega})$.\\
			Using Lemma $\ref{l7}$, $F(\lambda_1)-u_1=Z(\lambda_1,u_1)\notin\partial X$. Hence, $Z(\lambda_1,u_1)\in X_0$. So, $Z(A)\subset X_0,$ as $Z(A)$ is connected.\\
			Therefore, $F(\lambda)-u>0$, which implies $F(\lambda)>u$, $\forall(\lambda,u)\in A$. Hence, we get our required result.
		\end{proof}	
			\end{lemma}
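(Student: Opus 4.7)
The plan is to recast the statement as a topological assertion about the continuous ``defect'' map $Z\colon A\to C_0^{1,\alpha}(\bar{\Omega})$ defined by $Z(\lambda,u)=F(\lambda)-u$, and then to exploit the separation of the open cone $X_0$ from the complement of $X$ provided by Lemma~\ref{l7}. Continuity of $F$ on $I$ and of the projection onto the second factor on $A$ make $Z$ continuous, so $Z(A)$ is a connected subset of $C_0^{1,\alpha}(\bar{\Omega})$.

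The first step is to locate the distinguished image point $Z(\lambda_1,u_1)$ in the open interior $X_0$. The hypothesis $u_1\le F(\lambda_1)$ with $u_1\ne F(\lambda_1)$ gives $Z(\lambda_1,u_1)\in X\setminus\{0\}$. Since $u_1$ solves \eqref{e7} while $F(\lambda_1)$ is a strict supersolution and the two differ, Lemma~\ref{l7} yields $F(\lambda_1)-u_1\notin\partial X$, so $Z(\lambda_1,u_1)\in X\setminus\partial X=X_0$.

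Next I would show that the entire image $Z(A)$ avoids $\partial X$. For each $(\lambda,u)\in A$, the function $F(\lambda)$ satisfies the strict inequality in the definition of supersolution while $u$ satisfies \eqref{e7} with equality; testing both formulations against any fixed nonnegative bump $\varphi\in C_c^\infty(\Omega)$ with $\varphi\not\equiv 0$ produces a strict and an equal comparison at the same pair, which is impossible unless $F(\lambda)\neq u$. Hence Lemma~\ref{l7} applies to $(F(\lambda),u)$ and gives $Z(\lambda,u)\notin\partial X$ for every $(\lambda,u)\in A$. Consequently
\[
Z(A)\;\subset\; C_0^{1,\alpha}(\bar{\Omega})\setminus\partial X \;=\; X_0 \;\sqcup\; \bigl(C_0^{1,\alpha}(\bar{\Omega})\setminus X\bigr),
\]
a disjoint union of two open sets of $C_0^{1,\alpha}(\bar{\Omega})$.

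The conclusion is then automatic from connectedness: a connected set inside the disjoint union of two open sets must lie entirely in one of them, and since $Z(\lambda_1,u_1)\in X_0$ we get $Z(A)\subset X_0$. Translating back, $F(\lambda)-u\in X_0$ for every $(\lambda,u)\in A$, which in particular yields $u<F(\lambda)$ in $\Omega$, as required. The main obstacle is the verification that $F(\lambda)\neq u$ holds throughout $A$ so that Lemma~\ref{l7} is applicable at every point; once this strict separation of solutions from strict supersolutions is established, the topological dichotomy above closes the argument without further analysis.
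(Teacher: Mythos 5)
Your proposal follows the paper's argument exactly: the same defect map $Z(\lambda,u)=F(\lambda)-u$, the same appeal to Lemma~\ref{l7} to place $Z(\lambda_1,u_1)$ in $X_0$, and the same connectedness dichotomy between $X_0$ and the complement of $X$. You are in fact more careful than the paper at the one delicate point---checking that $F(\lambda)\neq u$ for \emph{every} $(\lambda,u)\in A$ so that Lemma~\ref{l7} applies throughout and $Z(A)$ genuinely avoids $\partial X$---a step the paper leaves implicit when it passes from ``$Z(\lambda_1,u_1)\in X_0$'' directly to ``$Z(A)\subset X_0$''.
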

		\begin{lemma}\label{arcoya lemma}[Ambrosetti-Arcoya \cite{Ambrosetti}].
		Given $X$ be a real Banach space with $U\subset X$ be open, bounded set. Let $a,b\in \mathbb{R}$ such that the equation $u-T(\lambda,u)=0$ has no solution on $\partial U$ for all $\lambda\in [a,b]$ and that $u-T(\lambda,u)=0$ has no solution in $\overline{U}$ for $\lambda=b$. Also let $U_1\subset U$ be open such that $u-T(\lambda,u)=0$ has no solution in $\partial U_1$ for $\lambda=a$ and $\deg(I-K_a, U_1, 0)\neq0
		.$\\
		Then there exists a continuum $C$ in $\sum=\left\{(\lambda, u)\in[a, b]\times X : u-T(\lambda,u)=0 \right\}$ such that $$C\cap(\{a\}\times U_1)\neq\emptyset~\text{and}~C\cap(\{a\}\times (U-U_1))\neq\emptyset.$$
		\end{lemma}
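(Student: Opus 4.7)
The plan is to prove this classical global continuation result by combining topological (Leray--Schauder) degree with a disconnection argument based on Whyburn's lemma. Let $\Sigma_U = \Sigma \cap ([a,b]\times\overline{U})$. Since $T$ is compact and $U$ is bounded, a standard argument shows $\Sigma_U$ is a compact subset of $[a,b]\times X$. Set $A_0 = \Sigma\cap(\{a\}\times \overline{U_1})$ and $B_0 = \Sigma\cap(\{a\}\times (\overline{U}\setminus U_1))$; the hypothesis that $u-T(\lambda,u)=0$ has no solution on $\partial U$ for $\lambda\in[a,b]$ and no solution on $\partial U_1$ for $\lambda=a$ ensures $A_0\cap B_0 = \emptyset$, and that $A_0, B_0$ are disjoint compact subsets of $\Sigma_U$. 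The hypothesis that $u-T(b,u)=0$ has no solution in $\overline{U}$ means $\Sigma_U \cap (\{b\}\times X) = \emptyset$.

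Next, I argue by contradiction. Suppose no continuum $C\subset\Sigma$ meets both $\{a\}\times U_1$ and $\{a\}\times(U\setminus U_1)$. Then, since the intersections $C\cap\Sigma_U$ that meet $A_0$ cannot reach $B_0$, no connected component of $\Sigma_U$ contains points of both $A_0$ and $B_0$. By Whyburn's lemma (separation in compact Hausdorff spaces), there exists a decomposition
\[
\Sigma_U = K_1 \sqcup K_2, \qquad K_1, K_2 \text{ compact and disjoint}, \qquad A_0\subset K_1, \quad B_0\subset K_2.
\]
By standard normality arguments, one can construct a bounded open set $V\subset [a,b]\times X$ with $K_1\subset V$, $\overline{V}\cap K_2 = \emptyset$, $\overline{V}\cap([a,b]\times \partial U) = \emptyset$, and $\overline{V}\cap(\{a\}\times X) \subset \{a\}\times U_1$. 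By construction, for each $\lambda\in[a,b]$ the slice $V_\lambda = \{u\in X : (\lambda,u)\in V\}$ is open, bounded, and contains no solution of $u-T(\lambda,u)=0$ on its boundary $\partial V_\lambda$.

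Thus the Leray--Schauder degree $d(\lambda) := \deg(I - T(\lambda,\cdot), V_\lambda, 0)$ is well defined. By the homotopy invariance of degree along $\lambda\in[a,b]$ and the continuity of the compact homotopy $T$, the function $d(\lambda)$ is constant. I would then compute its value at the two endpoints to derive a contradiction:
\begin{itemize}
\item At $\lambda = a$: by excision (since $V_a\subset U_1$ and $A_0\subset V_a$ collects all zeros of $I-T(a,\cdot)$ in $U_1$), one obtains $d(a) = \deg(I-T(a,\cdot), U_1, 0) = \deg(I-K_a, U_1, 0)\neq 0$.
\item At $\lambda = b$: since $\Sigma_U\cap(\{b\}\times X)=\emptyset$ and $\overline{V}$ is separated from points outside $K_1\cup K_2$ in $\Sigma_U$, the slice $V_b$ contains no zero of $I-T(b,\cdot)$; therefore $d(b)=0$.
\end{itemize}
These two facts contradict $d(a)=d(b)$, and hence the continuum $C$ with the required properties must exist. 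The main technical obstacle is the construction of the separating open set $V$ with simultaneous control on $\partial U$, on $\{a\}\times X$, and on both $K_1$ and $K_2$; this is where the compactness of $\Sigma_U$ and Whyburn's lemma play the essential role, after which the degree computation is routine.
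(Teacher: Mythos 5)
This lemma is quoted in the paper as a known result of Ambrosetti--Arcoya (with a citation to their book) and is given no proof there, so there is nothing in the paper to compare your argument against line by line. Your proposal is essentially the standard proof of this global continuation theorem and it is sound: compactness of $\Sigma_U=\Sigma\cap([a,b]\times\overline{U})$ follows from the compactness of $T$; the hypotheses on $\partial U$, on $\partial U_1$ at $\lambda=a$, and on $\overline{U}$ at $\lambda=b$ give exactly the disjointness and emptiness statements you use; Whyburn's separation lemma then yields the splitting $\Sigma_U=K_1\sqcup K_2$; and the degree computation $d(a)=\deg(I-K_a,U_1,0)\neq 0$ versus $d(b)=0$ produces the contradiction. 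Two points deserve to be made explicit if you write this up in full. First, since the slices $V_\lambda$ of your separating neighbourhood vary with $\lambda$, the step ``$d(\lambda)$ is constant'' is not the elementary homotopy invariance on a fixed open set but the \emph{generalized homotopy invariance} of the Leray--Schauder degree on open subsets of $[a,b]\times X$; this is standard but should be cited. Second, in constructing $V$ you should arrange $V\subset[a,b]\times U$ (not merely $\overline{V}\cap([a,b]\times\partial U)=\emptyset$) so that the absence of solutions in $\overline{U}$ at $\lambda=b$ really forces $d(b)=0$, and you should observe that $B_0\subset\{a\}\times(U\setminus U_1)$ (rather than $\overline{U}\setminus U_1$) because of the boundary hypotheses, so that a component of $\Sigma_U$ joining $A_0$ to $B_0$ is indeed a continuum with the two required intersection properties. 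With those details filled in, your argument is a complete and correct proof of the quoted lemma.
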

	\begin{theorem}\label{t1}[Mitidieri-Pohozaev \cite{Mitidieri}].
		If $p-1<q<\frac{N(p-1)}{N-p}, p<N$ and $C>0$, then the problem $$\int_{\mathbb{R}^n} |\nabla u|^{p-2}\nabla u.\nabla\phi\geq C \int_{\mathbb{R}^n} u^q\phi;~\phi\in C_c^\infty (\mathbb{R}^n)$$ does not have any positive solution in $C^1(\mathbb{R}^n)$.
	\end{theorem}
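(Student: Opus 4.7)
The plan is to argue by contradiction using the nonlinear capacity (test-function) method of Mitidieri--Pohozaev. Assume there exists a positive $u\in C^1(\mathbb{R}^N)$ satisfying the integral inequality for every nonnegative $\phi\in C_c^\infty(\mathbb{R}^N)$. The key device is to test against a localized, negative power of $u$. Fix a standard cutoff $\xi\in C_c^\infty(\mathbb{R}^N)$ with $0\leq\xi\leq 1$, $\xi\equiv 1$ on $B_R$, $\xi\equiv 0$ outside $B_{2R}$ and $|\nabla\xi|\leq C/R$, and take $\phi=\xi^m u^{-\alpha}$ for parameters $\alpha\in(0,p-1)$ and $m$ a large integer to be fixed later. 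Since $u$ is $C^1$ and strictly positive on the compact support of $\xi$, this $\phi$ is admissible after a standard mollification by the density of $C_c^\infty$.

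Substituting $\nabla\phi=m\xi^{m-1}u^{-\alpha}\nabla\xi-\alpha\xi^m u^{-\alpha-1}\nabla u$ into the given inequality and rearranging produces
\[
C\int \xi^m u^{q-\alpha}\,dx+\alpha\int \xi^m u^{-\alpha-1}|\nabla u|^p\,dx\leq m\int \xi^{m-1}u^{-\alpha}|\nabla u|^{p-1}|\nabla\xi|\,dx.
\]
The next step is to split the right-hand side via Young's inequality so that the gradient factor reproduces $|\nabla u|^p\xi^m u^{-(\alpha+1)}$, which is then absorbed into the second term on the left, leaving the residual estimate
\[
\int \xi^m u^{q-\alpha}\,dx\leq C\int \xi^{m-p}u^{p-1-\alpha}|\nabla\xi|^p\,dx.
\]
Applying H\"older's inequality to the right-hand side with exponent $\theta=(p-1-\alpha)/(q-\alpha)\in(0,1)$ (this is where $p-1<q$ is used) and reabsorbing the resulting power of $\int\xi^m u^{q-\alpha}$ yields
\[
\int \xi^m u^{q-\alpha}\,dx\leq C\int \xi^{k}|\nabla\xi|^{p/(1-\theta)}\,dx\leq CR^{N-p/(1-\theta)},
\]
for some nonnegative $k$ once $m$ is taken large.

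A short computation gives $1-\theta=(q-p+1)/(q-\alpha)$, so the scaling exponent $N-p/(1-\theta)$ can be made strictly negative by some admissible $\alpha>0$ precisely when $q<N(p-1)/(N-p)$, which is the hypothesis of the theorem. Sending $R\to\infty$ then forces $\int_{B_{R_0}}u^{q-\alpha}\,dx=0$ on every ball $B_{R_0}$, contradicting $u>0$. The main obstacle is the bookkeeping of the exponents: the single parameter $\alpha$ must simultaneously ensure admissibility of the test function, positivity of the H\"older exponent $\theta$, and vanishing of the capacity term $R^{N-p/(1-\theta)}$, and the balance of these three constraints pins down the critical exponent $N(p-1)/(N-p)$ exactly. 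A secondary technical point is justifying $\xi^m u^{-\alpha}$ as a limit of $C_c^\infty$ test functions, which is routine since $u$ is $C^1$ and strictly positive on $\mathrm{supp}\,\xi$.
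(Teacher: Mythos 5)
The paper does not prove Theorem \ref{t1} at all; it is quoted as an auxiliary result from Mitidieri--Pohozaev \cite{Mitidieri}, so there is no internal proof to compare against. Your argument is precisely the standard nonlinear-capacity (test-function) proof from that reference, and the exponent bookkeeping checks out: with $\phi=\xi^m u^{-\alpha}$, $0<\alpha<p-1$, one gets $1-\theta=(q-p+1)/(q-\alpha)$, so the capacity term scales like $R^{N-p(q-\alpha)/(q-p+1)}$, whose exponent is strictly negative for small $\alpha>0$ exactly when $q<N(p-1)/(N-p)$ (with $p<N$), and letting $R\to\infty$ forces $u\equiv 0$, the desired contradiction.
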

\begin{theorem}\label{figueiredo}[De Figueiredo et al. \cite{Figueiredo}].
Let $C$ be a cone in a Banach space X and $\phi:C \rightarrow C$ be a compact map such that $\phi(0)=0$. Assume that there exists $0<r<R$ such that 
\begin{enumerate}
	\item $x\neq t\phi(x)$ for $0\leq t \leq 1$ and $\|x\|=r$
	\item a compact homotopy $F:\bar{B_R}\times[0,\infty)\rightarrow C$ such that $F(x,0)=\phi(x)$ for $\|x\|=R$, $F(x,t)\neq x$ for $\|x\|=R$ and $0\leq t<\infty$ and $F(x,t)=x$ has no solution for $x\in\bar{B_R}$ for $t\geq t_0$.
\end{enumerate} 
Then if, $U=\{x\in C:r<\|x\|<R\}$ and $B_{\rho}=\{x\in C:\|x\|<\rho\}$ we have $deg(I-\phi,B_R,0)=0$, $deg(I-\phi,B_r,0)=1$ and $deg(I-\phi,U,0)=-1$.x

Juha
\end{theorem}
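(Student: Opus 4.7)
The plan is to invoke the three fundamental properties of the Leray--Schauder degree (equivalently, the fixed point index on the cone $C$): normalization, homotopy invariance, and additivity. The compactness hypotheses on $\phi$ and on the homotopy $F$ make all the degrees below well defined; the remaining work is just bookkeeping of admissibility on the relevant spheres $\partial B_r$ and $\partial B_R$.

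To establish $\deg(I-\phi, B_R, 0)=0$ I would use the homotopy $F(\cdot, t)$ supplied by hypothesis (2). Since $F(x,t)\neq x$ for every $x$ with $\|x\|=R$ and every $t\in[0,t_0]$, the homotopy $I-F(\cdot,t)$ is admissible on $\partial B_R$ throughout this parameter range. At $t=0$ it reduces to $I-\phi$, while at $t=t_0$ the equation $F(x,t_0)=x$ has no solution anywhere in $\bar{B_R}$, so the degree vanishes at that endpoint. Homotopy invariance then yields $\deg(I-\phi,B_R,0)=0$.

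For $\deg(I-\phi,B_r,0)=1$ I would employ the straight-line homotopy $H(x,t)=t\phi(x)$ with $t\in[0,1]$. Hypothesis (1) is precisely the admissibility statement $x\neq H(x,t)$ on $\partial B_r$. At $t=0$ the map is the identity and the origin lies in $B_r$, so normalization gives $\deg(I,B_r,0)=1$; invariance under the homotopy then transports this to $\deg(I-\phi,B_r,0)=1$.

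Finally, the third value follows by the additivity (excision) property. Since $I-\phi$ has no zeros on $\partial B_r\cup\partial B_R$ (take $t=1$ in condition (1) and $t=0$ in condition (2)), the splitting $\deg(I-\phi,B_R,0)=\deg(I-\phi,B_r,0)+\deg(I-\phi,U,0)$ is valid, whence $\deg(I-\phi,U,0)=0-1=-1$. I do not expect a serious obstacle: the only delicate point is verifying that each homotopy remains fixed-point-free on the correct sphere, which the two hypotheses are precisely designed to supply; the compactness of $\phi$ and of $F$ takes care of the standard topological requirements for defining the degree on a cone.
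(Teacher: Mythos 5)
This statement is not proved in the paper at all: it is quoted, with attribution, as a known result of de Figueiredo--Lions--Nussbaum \cite{Figueiredo}, so there is no in-paper argument to compare yours against. Your outline is the classical proof of that cited theorem and is essentially correct: the homotopy $F(\cdot,t)$, $t\in[0,t_0]$, kills the index on $B_R$; the segment $t\phi$, $t\in[0,1]$, together with normalization gives index $1$ on $B_r$; and additivity over the disjoint pieces $B_r$ and $U$ (legitimate because hypothesis (1) at $t=1$ and hypothesis (2) at $t=0$ rule out fixed points on $\{\|x\|=r\}$ and $\{\|x\|=R\}$) yields $\deg(I-\phi,U,0)=0-1=-1$. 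Two small points are worth making explicit. First, since $B_\rho$ and $U$ are only relatively open in the cone $C$, the ``degree'' here is the fixed point index for compact maps of the cone (or the Leray--Schauder degree after composing with a retraction onto $C$); you note this parenthetically, but it is what makes all three quantities well defined. Second, hypothesis (2) asserts $F(x,0)=\phi(x)$ only for $\|x\|=R$, not on all of $\bar{B_R}$, so your homotopy argument computes $\deg(I-F(\cdot,0),B_R,0)$; to identify this with $\deg(I-\phi,B_R,0)$ you must add the standard boundary-dependence step, e.g.\ the convex homotopy $sF(x,0)+(1-s)\phi(x)$, which is admissible on $\partial B_R$ precisely because the two maps coincide and are fixed-point free there. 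With those two clarifications your argument is complete and coincides with the proof in the cited reference.
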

\section*{Acknowledgement}
Two of the authors, S. Ghosh and A. Panda, thanks for the financial assistantship received to carry out this research work from the Council of Scientific and Industrial Research (C.S.I.R.), Govt. of India and Ministry of Human Resource Development(M.H.R.D.), Govt. of India, respectively. This is also to declare that there are no financial conflict of interest whatsoever.
Finally the authors thank the anonymous referee(s) and the editor(s) for the constructive comments and suggestions.
	
\end{document}